\documentclass[12pt]{amsart}
\usepackage{amsmath,amsfonts,amssymb,amsthm}
\usepackage{amsmath,amstext,amsbsy,amssymb}
\usepackage{epsfig}
\usepackage{geometry} % see geometry.pdf on how to lay out the page. There's lots.
\geometry{a4paper} % or letter or a5paper or ... etc
% \geometry{landscape} % rotated page geometry

%Needed theorems
\newtheorem{theorem}{Theorem}[section]
\newtheorem{definition}{Definition}[section]
\newtheorem{example}{Example}[section]

\newtheorem{corollary}{Corollary}[section]

\newtheorem{proposition}{Proposition}[section]

\newtheorem{lemma}{Lemma}[section]
\newtheorem{remark}{Remark}[section]

% See the ``Article customise'' template for come common customisations

%The symbols used for the various quantum groups and for various
%elements, such as the quantum pfaffian, are defined here.  This
%allows us to change the notation at a later day if it becomes
%necessary.
%The following commands must all be used in math mode
\newcommand{\pfq}       {{P\hspace{-.8mm}f}_q}  %quantum pfaffian
\newcommand{\pfc}       {{P\hspace{-.8mm}f}}        %classic pfaffian
\newcommand{\GL}        {GL(n,\mathbb{C})}      %classic general linear group
\newcommand{\GLq}       {GL_q(n,\mathbb{C})}    %quantum general linear group
\newcommand{\GLqt}       {GL_q(2n,\mathbb{C})}    %quantum general linear group
\newcommand{\SP}        {Sp(2n,\mathbb{C})}      %classic symplectic group SP(n,\mathbb{C})
\newcommand{\SPq}       {Sp_q(2n,\mathbb{C})}   %old version  SP_q(n,\mathbb{C})

\newcommand{\Uspq}  {U_q(\mathfrak{sp}(2n,\mathbb{C}))}
\newcommand{\Usp}       {U(\mathfrak{sp}(2n,\mathbb{C}))}
\newcommand{\Uglq}  {U_q(\mathfrak{gl}(n,\mathbb{C}))}
\newcommand{\Uq}        {U_q(\mathfrak{g})}
\newcommand{\AXq}       {A_q(X)}
\newcommand{\AGq}   {A(G)}
\newcommand{\Aq}        {\mathcal{A}_q}
\newcommand{\ARq}   {A_q^R(\mathcal{A})}
\newcommand{\ALq}       {A_q^L(\mathcal{A})}
\newcommand{\ABplus}    {A(B_{+})}
\newcommand{\ABminus}   {A(B_{-})}

%\newcommand{\la}{\lambda}
%\newcommand{\ep}{\epsilon}

%%% BEGIN DOCUMENT
%\begin{document}

\begin{document}

\title{Zonal Polynomials and Quantum Antisymmetric Matrices}
\author{Naihuan Jing and Robert Ray}
%\date{September 2, 2008} % delete this line to display the current date
%\date{May 20, 2011}

\thanks{{\scriptsize
\hskip -0.4 true cm MSC (2010): Primary: 20G42;  Secondary: 17B37, 43A90, 05E10.
\newline Keywords: Pfaffians, quantum groups, invariants, quantum anti-symmetric matrices.\\
%Received: 30 January 2010, Accepted: 16 August 2010.\\
%$*$Corresponding author.
}}

\maketitle

\begin{abstract}
We study the quantum symmetric spaces for quantum general linear groups modulo
symplectic groups. We first determine the structure of the quotient quantum group and
completely determine the quantum invariants. We then derive the characteristic property
for quantum Phaffian as well as its role in the quantum invariant sub-ring. The spherical functions, viewed as
Macdonald polynomials, are also studied as the quantum analog of zonal spherical polynomials.
\end{abstract}
%\tableofcontents

%%%%%%%%%%%%%%%%%%%%%%%%%%
%Introduction
%%%%%%%%%%%%%%%%%%%%%%%%%%
\section{Introduction}
The regular representation of $\GL$ can be realized on the ring
\begin{equation}
    A(X) = \mathbb{C}[x_{11},x_{12},\ldots, x_{nn}]
\end{equation}
where regular functions are polynomials of the matrix elements of
the $n \times n$ matrices. It is well known that $A(X)$ is a completely reducible $\GL$-module
and the associated irreducible
polynomial sub-representations are parametrized by the set of partitions
\begin{equation}
    P_n = \left\{ \lambda = (\lambda_1, \ldots, \lambda_n) \in \mathbb{Z}^n \;;\; \lambda_1 \ge \cdots \ge \lambda_n \ge 0    \right\}.
\end{equation}
For a given $\lambda \in P_n$, there is a unique (up to isomorphism)
irreducible representation $V(\lambda)$ with highest weight
$\lambda$.  Similarly, one considers the modules
$A(Sym(n))$ of the polynomials in the coordinates of the $n \times n$
symmetric matrix, and the module $A(Skew(2n))$ of the polynomials in the
coordinates of the $2n \times 2n$ skew symmetric matrix.  These
representations decompose into the multiplicity free sums \cite{Hu, Ho}:
\begin{align}
    A(Sym(n)) & \simeq \bigoplus_{\lambda \in P_n} V(2\lambda)\\
    A(Skew(2n)) & \simeq \bigoplus_{\lambda \in P_{n}} V(\lambda_1,\lambda_1,\lambda_2, \lambda_2,
    \ldots, \lambda_{n},\lambda_{n}),
\end{align}
which are invariant under the action of $O(n,\mathbb{C})$ and $\SP$ respectively.
As L. Hua first noticed and A. James later formulated that the $O_n$ and $Sp_n$ invariants are
one dimensional and the
zonal spherical functions enjoy similar properties of Schur symmetric functions \cite{Hu, J, Mac}.

In the case of quantum analog of the symmetric pair of general linear groups and symplectic groups,
Noumi and Letzter
\cite{N, L} showed that the quantum spherical functions are indeed certain Macdoanald symmetric functions
by working on the quantum algebra of the enveloping algebras.
We will study directly the quantum invariant ring as a subring of the quantum general linear group.
As in \cite{JY}, we compute the Hopf ideal of quantum invariants for the symplectic case using
certain quadratic polynomials of matrix coefficients of quantum general linear groups.

A new feature in current work on quantum invariants is that we
will study the important role played
by Pfaffian as in the classical symplectic case. In the quantum case, the quantum Phaffian
played an important role in the invariant theory as well \cite{S}. We first give a closed form definition for
the quantum Phaffian
and study its representation-theoretic meaning in the quantum setting.
Through this we are able to give an appropriate quantum analog of
its relations with quantum determinant. As expected,
quantum Phaffians enjoy similar properties as quantum determinant in the orthogonal case.

This paper is organized as follows.
In Section \ref{QuantumGroups} we first recall some basic facts of certain quantum algebras, in particular, we discuss a quantum deformation of $A(X)$ and the associated quantum version of $\GL$ as presented in Noumi, Yamada, and Mimachi \cite{NYM93} and we recall the quantized universal enveloping algebra $\Uglq$.  In Section \ref{SympecticInvariants} we describe a quantum symplectic group, $\SPq$.  Since there does not seem to be a natural embedding of  $\SPq$ in $\GLqt$ we define $\SPq$ invariants (left and right) in an infinitesimal manner, similar to an earlier construction by Jing and Yamada \cite{JY} of polynomial invariants for a quantum orthogonal group.  These quantum symplectic invariants give us a quantum version of the regular functions of the
antisymmetric matrices. In addition to defining the generators of these functions, we describe their relations and we discuss a construction of a quantum analog to the Pfaffian function.

We then describe a complete reduction of the $\SPq$ invariant spaces (left and right) into irreducible
modules and we follow with a construction and characterization of the associated bi-invariant space and its basis of zonal polynomials. In the last section, a connection between the zonal polynomials and certain Macdonald polynomials is discussed.

%%%%%%%%%%%%%%%%%%%%%%%%%%%%%%%%%%%
%  Quantum Groups
%%%%%%%%%%%%%%%%%%%%%%%%%%%%%%%%%%%
\section{Quantum Groups}\label{QuantumGroups}

Quantum groups are defined as certain one-parameter deformation of
the algebra of algebraic functions on simple Lie groups \cite{FRT}.
In other words, we will describe $\AXq$ to be like the classical
algebra $A(X)$, except with noncommuting relations imposed upon its
generators.  Throughout the paper we will let $q$ to be a complex
number and for $q \ne 1$ we require that $q$ not a root of unity.

\subsection{$\AXq$, $\AGq$ and $\GLq$}
We first define the algebra of functions $\AXq$ on $X =
Mat_q(n,\mathbb{C})$ as a noncommutative
$\mathbb{C}$-algebra
\begin{equation}
\AXq = \mathbb{C}_q\left[x_{11},x_{12}, \ldots , x_{nn} \right].
\end{equation}
generated by $x_{11},x_{12}, \ldots , x_{n,n}$ and with relations
\begin{align*}
    x_{ik}x_{jk} =&\;qx_{jk}x_{ik}, \quad
    x_{ki}x_{kj} =\;qx_{kj}x_{ki},\\
    x_{il}x_{jk} =&\;x_{jk}x_{il},\\
    x_{ik}x_{jl}-&x_{jl}x_{ik}=\;\left(q-q^{-1}\right)x_{il}x_{jk},
\end{align*}
where $i<j$ and $k<l$. The relations can be visualized by the
diagram (see Figure \ref{X_Relations}) with a ``square" of
generators.
\begin{figure}[h]\label{X_Relations}
\begin{center}
\setlength{\unitlength}{1cm}

\begin{picture}(3,3)
    \put(0,0){\makebox(1,1){$x_{jk}$}}
    \put(0,2){\makebox(1,1){$x_{ik}$}}
    \put(2,0){\makebox(1,1){$x_{jl}$}}
    \put(2,2){\makebox(1,1){$x_{il}$}}

    \put(1,0.5){\vector(1,0){1}}
    \put(1,2.5){\vector(1,0){1}}
    \put(0.5,2){\vector(0,-1){1}}
    \put(2.5,2){\vector(0,-1){1}}
\end{picture}
\caption{$\AXq$ Relations, $x \rightarrow y$ implies $xy = qyx$}
\end{center}
\end{figure}

$\AXq$ is a bialgebra using the same coproduct and counit maps as
defined on $A(X)$,  see \cite{Mo}.

Let $I$ and $J$ be two subsets of $\{ 1,2, \ldots, n \}$ with $\#I = \#J = r$ with ordered elements,
i.e. $i_1 < i_2 < \ldots <i_r \in I$ and $j_1 < j_2 < \ldots j_r \in J$.  The quantum $r$-minor
determinants are defined as
\begin{equation}
    \xi_J^I = \xi_{j_1,\ldots,j_r}^{i_1,\ldots,i_r} = \sum_{\sigma \in
    \mathfrak S_r}(-q)^{l(\sigma)}x_{i_1j_{\sigma(1)}}x_{i_2j_{\sigma(2)}} \ldots x_{i_r j_{\sigma(r)}}
\end{equation}
where $l(\sigma)$ denotes the number of pairs $(i,j)$ with $i<j$ and
$\sigma(i) > \sigma(j)$. There is a unique quantum $n$-minor
determinant, and it is denoted by $det_q$ \cite{JY}. We define the
algebra of regular functions $\AGq$ on the quantum group $\GLq$ by
adjoining $det_q^{-1}$ to $\AXq$
\begin{equation}
    \AGq = \left[x_{11},x_{12}, x_{13},\ldots, x_{nn}, det_q^{-1} \right]
\end{equation}
Then, $\GLq$ is defined as the spectrum of the Hopf algebra algebra $\AGq$, i.e.
\begin{equation}
    \GLq=Spec(\AGq)
\end{equation}
one usually refers simply $\GL$ as $\AGq$.

In addition to the relations of $\AXq$, $\AGq$ also has the following relations \cite{NYM93}
\begin{equation}
    x_{ij}\cdot det_q^{-1} = det_q^{-1} \cdot x_{ij}
\end{equation}
\begin{equation}
    det_q^{-1} \cdot det_q = det_q \cdot det_q^{-1} = 1.
\end{equation}
This allows us to define the algebra morphism $S:\AGq \rightarrow \AGq$ by
\begin{equation}
    S(x_{ij}) = (-q)^{i-j}\xi^{\hat{j}}_{\hat{i}} \cdot det_q^{-1} \quad 1 \le i,j \le n,
\end{equation}
where $\hat{k} = \{1, \ldots, k-1, k+1, \ldots, n \}$. $S$
is the antipode for $\AGq$ and makes $\AGq$ a Hopf algebra.

%Other Quantum groups
\subsection{Additional Quantum Groups}
In addition to the above mentioned quantum groups, we need some
additional subgroups of $G = \GLq$.

The diagonal subgroup $H_n$ of $\GLq$ is defined by its regular functions
\begin{equation}
    A(H_n) = \mathbb{C}\left[t_1, t_1^{-1}, \ldots,  t_n, t_n^{-1} \right].
\end{equation}
Associated with this commutative Hopf algebra, we have the restriction map $\pi_H: A(G) \rightarrow A(H_n)$ defined by
\begin{equation}
    \pi_H(x_{ij}) = \delta_{i,j}t_i
\end{equation}

The Borel subgroups $B_{+}$ and $B_{-}$ of $\GLq$ consist of the upper and lower triangular matrices
and are defined in terms of their associated Hopf algebras
\begin{align}
    \ABplus &= \mathbb{C} \left[b_{i,j}  \right] ,\;\;\;\; i \le j,\\
    \ABminus &= \mathbb{C} \left[b_{i,j} \right] , \;\;\;\;i \ge j.
\end{align}
These algebras have relations induced from $\AGq$ and we note that the diagonal elements $b_{11},\ldots,b_{nn}$
commute with each other, \cite{NYM93}.  With each of these Hopf algebras we define the restrictions
maps $\pi_{B_+}: \AGq \rightarrow \ABplus$ and $\pi_{B_-}:\AGq \rightarrow \ABminus$ respectively by
\begin{align}
    \pi_{B_+}(x_{ij}) &= \left\{\begin{array}{cc}b_{ij}, & (1 \le i \le j \le n) \\0, & (i > j)\end{array}\right. ,\\
    \pi_{B_-}(x_{ij}) &= \left\{\begin{array}{cc}b_{ij}, & (1 \le j \le i \le n) \\0, & (j > i)\end{array}\right. .
\end{align}

%Quantum Enveloping algebra Uq
\subsection{Enveloping Algebra $\Uq$}
We recall the quantum universal enveloping algebra $\Uq$  of
$\mathfrak{g} = \mathfrak{gl}(n,\mathbb{C})$ or rather $\mathfrak{sl}(n,\mathbb{C})$\cite{Ka}.  Let $L_n$ be the free $\mathbb{Z}$-module of rank
$n$ with the canonical basis $\{ \epsilon_1, \ldots,\epsilon_n \}$,
i.e. $L_n = \bigoplus\limits_{k=1}^n \mathbb{Z}\epsilon_k$, endowed
with the symmetric bilinear form $\langle \epsilon_i,\epsilon_j
\rangle = \delta_{ij}$. We will define $\alpha_k =
\epsilon_k-\epsilon_{k+1}$.  Additionally, we will identify a
partition $\lambda = (\lambda_1, \ldots, \lambda_n) \in P_n$ with
$\lambda_1\epsilon_1 + \cdots + \lambda_n\epsilon_n \in L_n$. We
will refer to such an element of $L_n$ as a dominant integral
weight.  The fundamental weights are defined by $\Lambda_k =
\epsilon_1 + \cdots + \epsilon_k$ (see \cite{JY}).  Now we define
$\Uq$ as the $\mathbb{C}$-algebra with generators $e_k, f_k$  $(1
\le k < n)$ and $q^{\lambda}$ $\left(\lambda \in
\frac{1}{2}L_n\right)$ with the following relations \cite{NYM93}:
\begin{align}
    q^0 &= 1, \quad q^{\lambda}q^{\mu} = q^{\lambda +\mu}, \\
    q^{\lambda}e_kq^{-\lambda} &= q^{\langle \lambda,\alpha_k \rangle}e_k \quad \left(1 \le k < n \right),\\
    q^{\lambda}f_kq^{-\lambda} &= q^{-\langle \lambda,\alpha_k \rangle}f_k \quad \left(1 \le k < n \right),\\
    e_i f_j - f_j e_i &= \delta_{ij} \frac{q^{\alpha_i}-q^{-\alpha_i}}{q-q^{-1}} \quad \left(1 \le i,j <n \right),\\
    e_i^2 e_j - &\left(q+q^{-1}\right)e_i e_j e_i +e_j e_i^2 = 0 \quad (|i-j| = 1),\\
    f_i^2 f_j - &\left(q+q^{-1}\right)f_i f_j f_i +f_j f_i^2 = 0 \quad (|i-j| = 1),\\
    e_ie_j &=e_je_i, \quad f_if_j=f_jf_i \quad (|i-j| > 1).
\end{align}
We define a coproduct, $\Delta_U$, and a counit, $\varepsilon_U$, on the generators by
\begin{align}
    \Delta_U(q^{\lambda}) &= q^{\lambda} \otimes q^{\lambda}, \quad \varepsilon(q^{\lambda}) = 1, \\
    \Delta_U(e_k) &= e_k \otimes q^{-\alpha_k/2} + q^{\alpha_k/2} \otimes e_k, \quad \varepsilon(e_k) = 0,\\
    \Delta_U(e_k) &= f_k \otimes q^{-\alpha_k/2} + q^{\alpha_k/2} \otimes f_k, \quad \varepsilon(f_k) = 0,
\end{align}
making $\Uq$ a bialgebra.  Additionally, with the antipode $S_{U}$ defined by
\begin{eqnarray}
    S_U(q^{\lambda}) &=& q^{-\lambda},\\
    S_U(e_k) &=& -q^{-1}e_k,\\
    S_U(f_k) &=& -qf_k.
\end{eqnarray}
$\Uq$ becomes a Hopf algebra.

\subsection{$\AGq, \Uq$ Duality}
%A(G) and Uq duality
There exists a well-known dual pairing of  Hopf algebras $\Uq$ and $\AGq$
\begin{equation}
    a(\varphi) \in \mathbb{C},  \hspace{1cm}  a \in \Uq, \varphi \in \AGq
\end{equation}
satisfying the following relations:
\begin{align}
    q^{\lambda}(x_{ij}) &=\; \delta_{i,j}q^{\langle \lambda,\varepsilon_i \rangle}, \quad \lambda \in
    \frac{1}{2}L_n, \quad 1 \le i,j \le n\\
    e_k(x_{ij}) &=\; \delta_{i,k}\delta_{j,k+1}, \quad 1 \le i,j \le n\\
    f_k(x_{ij}) &=\; \delta_{i,k+1}\delta_{j,k}, \quad 1 \le i,j \le n\\
    q^{\lambda}(det_q^m) & = \; q^{m \langle \lambda,\varepsilon_1, \ldots,\varepsilon_n \rangle} \quad m \in \mathbb{Z}\\
    e_k(det_q^m) & = \; f_k(det_q^m) = 0  \quad m \in \mathbb{Z}
\end{align}
We extend these to the rest of $\Uq$ and $\AGq$ by
\begin{align}
    a(\varphi \psi) &=\; \Delta_U(a)(\varphi \otimes \psi)\\
    a(1) &=\; \varepsilon_U(a)\\
    (ab)(\varphi) &= \; (a \otimes b)\Delta(\varphi)\\
    1(\varphi) &=\; \varepsilon(\varphi) \qquad (a,b \in \Uq, \quad \varphi, \psi \in \AGq)
\end{align}
Additionally, we have
\begin{equation}
    S_U(a).\psi = a.S(\psi) \quad a \in \Uq, \psi \in \AGq
\end{equation}
These relations realize a duality between the two Hopf algebras and allows us to regard the elements of
$\Uq$ as linear functionals on $\AGq$ (see \cite{NYM93}).  This duality allows any right $\AGq$-comodule
$V$ (resp. left $\AGq$-comodule $W$) with structure map $R_G:V \rightarrow V \otimes A_q(G)$  (resp.
$L_G:W \rightarrow \AGq \otimes W$) to become a left (resp. right) $\Uq$-module with the following defined action
\begin{align} \label{UqLeftMod}
     a.v = (id \otimes a)R_G(v), \quad a\in \Uq, v \in V, \\
\label{UqRightMod}
     w.a = (a \otimes id)L_G(v), \quad a\in \Uq, w \in W .
\end{align}
More specifically, we already know $\AXq$ is a completely reducible
two-sided $\AGq$-comodule using the comultiplication, $\Delta$, as
the comodule structure map.  As such, it becomes a completely
reducible left and right $\Uq$-module \cite{NYM93, JY}.  We can
describe the left module action of the generators of $\Uq$ on the
generators of $\AXq$ by
\begin{align}
    q^{\lambda}.x_{ij} & = \; x_{ij}q^{\langle \lambda , \varepsilon_j \rangle},\\
    e_k.x_{ij} & = \; x_{i,j-1}\delta_{j,k+1},\\
    f_k.x_{ij} & = \; x_{i, j+1} \delta_{j,k}.
\end{align}
and the right module action as
\begin{align}
    x_{ij}.q^{\lambda} & = \; x_{ij}q^{\langle \lambda , \varepsilon_i \rangle},\\
    x_{ij}.e_k & = \; x_{i+1,j}\delta_{k,i},\\
    x_{ij}.f_k & = \; x_{i-1, j} \delta_{k+1,i}.
\end{align}

%Relative Invariants
\subsection{Relative Invariants}
For an element $\lambda = \sum\limits_{k=1}^n \lambda_k \epsilon_k \in L_n$, let $z^{\lambda}
= \prod\limits_{k=1}^n z_{kk}^{\lambda_k} \in A(B_{\pm})$ and $t^{\lambda}
= \prod\limits_{k=1}^n t_{k}^{\lambda_k} \in A(H)$,
we define the spaces of relative invariants with respect to the subgroups $B_{\pm}$ by (see \cite{NYM93, JY})
\begin{align}
    A(G/B_{+};z^{\lambda}) = \left\{ \varphi \in \AGq;(id \otimes \pi_{B_{+}}) \Delta(\varphi) = \varphi \otimes z^{\lambda} \right\}, \\
    A(B_{-}\backslash G;z^{\lambda}) = \left\{ \varphi \in \AGq;(\pi_{B_{-}} \otimes id) \Delta(\varphi) = z^{\lambda} \otimes \varphi  \right\},
\end{align}
where the restrictions maps $\pi_{\pm}: \AGq
\rightarrow A_q(B_{\pm})$ are defined by $\pi_{B_+}(x_{ij}) =
z_{i,j}$  $ (1\le i \le j \le n)$, $\pi_{B_+}(x_{ij}) = 0$ $(i >
j)$, and $\pi_{B_-}(x_{ij}) = z_{i,j}$  $ (1\le j \le i \le n)$,
$\pi_{B_-}(x_{ij}) = 0$ $(i < j)$.

$A(G/B_{+};z^{\lambda})$ (resp. $A(B_{-}\backslash G;z^{\lambda})$) is a left (resp. right)
$\AGq$-subcomodule of $\AGq$ with structure mapping $\Delta$. It is proved in \cite{NYM93} that,
for a dominant integral weight $\lambda \in P_n$, the space $A(G/B_{+};z^{\lambda}) $ (resp.
$A(B_{-}\backslash G;z^{\lambda})$) gives a realization of the irreducible left (resp. right)
$\AGq$-subcomodule $V_q^L(\lambda)$  (resp. $V_q^R(\lambda)$) of $\AXq$, with highest weight $\lambda$.

%%%%%%%%%%%%%%%%%%%%%%%%%%%%%%%%%%%
% Symplectic Invariants and module structure
%%%%%%%%%%%%%%%%%%%%%%%%%%%%%%%%%%%
\section{Spaces of  $q$-Symplectic Invariants}\label{SympecticInvariants}

\subsection{$\Uspq$}
%define a quantum version of U(sp(n))
Here we describe a subalgebra of $\Uq$ that is a quantum deformation of  $\Usp$.
Relative to the standard $n$ dimensional representation of $\Uq$, we identify the generators
$e_k$ of $\Uq$ with $E_{k,k+1}$ and  $f_k$ with $E_{k+1,k}$.  If we let
$\lambda = \lambda_1\epsilon_1 + \cdots + \lambda_{2n}\epsilon_{2n} \in \frac{1}{2}L_{2n}$, then $q^{\lambda}$ is represented by
\begin{equation}
    q^{\lambda_1}E_{11} + q^{\lambda_2}E_{22} + \cdots + q^{\lambda_n}E_{2n,2n}
\end{equation}
We may then inductively generate the other elements, $E_{i,j}$, where $|i-j|>1$,  by
\begin{equation}
    E_{i,j} = E_{i,k}E_{k,j}-E_{k,j}E_{i,k}
\end{equation}
where $i<k<j$ or $j < k < i$ and $E_{i,j}$ and $E_{j,i}$ are
independent of our choice of $k$, see \cite{JY}.

We define the subalgebra $\Uspq$ of $\Uq$ as the subalgebra generated by the following elements:
\begin{align}
    sp_e(i,j) & = \; E_{2i-1,2j} + q^{2(i-j)}E_{2j-1,2i} &1 \le i \ne j \le n \label{gen_e}\\
    sp_e(i,i) & = \; E_{2i-1,2i}  &1 \le i \le n\\
    sp_f(i,j) & = \; E_{2i,2j-1} + q^{2(i-j)}E_{2j,2i-1} & 1 \le i \ne j \le n \label{gen_f}\\
    sp_f(i,i) & = \; E_{2i,2i-1}  &1 \le i \le n\\
    sp_h(i,j) & = \; E_{2i-1,2j-1}-q^{2(i-j)}E_{2j,2i} & 1 \le i,j \le n\label{gen_h}
\end{align}
with $i,j \le n$.  It can be directly shown that the elements of the form
\begin{align}\label{spgen1}
    sp_e(j,j),  \: sp_f(j,j), \quad \text{where } 1 \le j \le n,\\
\label{spgen2}
    sp_e(i,i+1), \: sp_f(i,i+1) ,\quad 1\le i \le n-1
\end{align}
generate $\Uspq$.
%definition of q-symplectic invariants {}^kV and V^k
\subsection{$q$-Symplectic Invariants}
For a given left (resp. right) $\Uq$-module $V$ (resp. $W$) we
define the $q$-symplectic invariants by %\cite{JY}
\begin{align}
    V^K &= \; \left\{ v \in V; sp_e(i,j).v = 0, sp_f(i,j).v = 0  \quad 1\le i,j \le n  \right\} \\
    {}^KW &= \; \left\{ w \in W; w.sp_e(i,j) = 0, w.sp_f(i,j) = 0  \quad 1\le i,j \le n  \right\}
\end{align}
Using the fact that $\AXq$ is a two-sided $\Uq$-module (see \ref{UqLeftMod}, \ref{UqRightMod})
 we define the left and right quantum symplectic invariants in $\AXq$ as
\begin{align}
    \AXq^K &= \; \left\{ \varphi \in \AXq; sp_e(i,j).\varphi = 0, sp_f(i,j).\varphi = 0  \quad 1\le i,j \le n  \right\} \\
    {}^K\AXq &= \; \left\{ \varphi \in \AXq; \varphi.sp_e(i,j) = 0, \varphi.sp_f(i,j) = 0  \quad 1\le i,j \le n  \right\}
\end{align}

The spaces $\AXq^K$ and ${}^K\AXq$ are subalgebras of $\AXq$.  Additionally, we see that $\AXq^K$
is a left $\AGq$-subcomodule of $\AXq$ (similarly ${}^K\AXq$ is a right $\AGq$-subcomodule of $\AXq$).
Equivalently, $\AXq^K$ is a right $\Uq$-submodule of $\AXq$ and ${}^K\AXq$ is a left $\Uq$-submodule of $\AXq$.

%definition of Antisymmetric generators
\begin{definition} \label{Def2} For $n$ even, the following quadratic elements of $\AXq$ may be defined
\begin{align}\label{Zl}
    z_{i,j}^L &= \sum_{k=1}^{n} q^{(i+j+1-4k)/2}\left( x_{i,2k-1}x_{j,2k}-qx_{i,2k}x_{j,2k-1} \right)\\
    &= \sum_{k=1}^{n/2} q^{(i+j+1-4k)/2} \xi_{2k-1,2k}^{i,j},\nonumber\\
\label{Zr}
    z_{i,j}^R &= \sum_{k=1}^{n} q^{-(i+j+1-4k)/2}\left( x_{2k-1,i}x_{2k,j}-qx_{2k,i}x_{2k-1,j} \right)\\
    &= \sum_{k=1}^{n/2} q^{-(i+j+1-4k)/2} \xi_{i,j}^{2k-1,2k}. \nonumber
\end{align}
\end{definition}

Using the fact
\begin{align}
    e_k.\xi^{i,j}_{r,s}& = \delta_{k,r-1} \xi^{i,j}_{r-1,s} + \delta_{k,s-1}\xi^{i,j}_{r,s-1}\\
    f_k.\xi^{i,j}_{r,s}& = \delta_{k,r} \xi^{i,j}_{r+1,s} + \delta_{k,s}\xi^{i,j}_{r,s+1}
\end{align}
it can be shown that $z_{i,j}^L$ (resp. $z_{i,j}^R$) are annihilated by $sp_e(k,k)$, $sp_e(k,k+1)$,
$sp_f(k,k)$ and $sp_f(k,k+1)$, which is sufficient to show they are annihilated by all $sp_e(k,l)$
and $sp_f(k,l)$ and  therefore $z_{i,j}^L \in \AXq^K$ (resp. $z_{i,j}^R \in {}^K\AXq$)

We denote the subalgebra of $\AXq^K$ (resp. ${}^K\AXq$)  by $\ALq$ (resp. $\ARq$)
generated by $z_{i,j}^L$ (resp. $z_{i,j}^R$).  $\ALq$ is a left $\AGq$-subcomodule of $\AXq^K$
and $\ARq$ is a right $\AGq$-subcomodule of $\AXq^K$.

%Antisymmetric generator relations
\begin{theorem}\label{ASRelations}   %\cite{Ra05}
The algebras $\ALq$ and $\ARq$ are isomorphic to the algebra $A_q(\mathcal{A})$ generated by
$z_{i,j}$ ($1 \le i, 1 \le j$) with the following relations:
\end{theorem}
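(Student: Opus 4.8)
The plan is to present $A_q(\mathcal{A})$ as a set of defining relations for each of $\ALq$ and $\ARq$, by the standard two-step scheme: construct a surjection out of the abstractly presented algebra, then prove injectivity by matching normal forms. First I would define the algebra homomorphism $\phi_L\colon A_q(\mathcal{A})\to\ALq$ by $z_{i,j}\mapsto z_{i,j}^L$, and similarly $\phi_R\colon A_q(\mathcal{A})\to\ARq$ by $z_{i,j}\mapsto z_{i,j}^R$. For $\phi_L$ to be well defined I must check that the concrete elements $z_{i,j}^L$ of Definition~\ref{Def2} satisfy every relation in the displayed list. Since each $z_{i,j}^L$ is a $q$-weighted sum of the quantum $2$-minors $\xi^{i,j}_{2k-1,2k}$, this is a direct (if lengthy) manipulation using only the four defining relations of $\AXq$ together with the elementary quadratic identities among the $\xi^{i,j}_{r,s}$; the degenerate relation $z_{i,i}=0$ and the $q$-antisymmetry relating $z_{j,i}$ to $z_{i,j}$ fall out of the expansion of the minors. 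As the $z_{i,j}^L$ generate $\ALq$ by definition, $\phi_L$ is automatically surjective, and likewise $\phi_R$; hence the whole content of the theorem is the \emph{injectivity} of $\phi_L$ and $\phi_R$.

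The right-handed situation is the transpose mirror of the left-handed one: comparing \eqref{Zl} and \eqref{Zr}, the element $z_{i,j}^R$ is built from the column minors $\xi^{2k-1,2k}_{i,j}$ exactly as $z_{i,j}^L$ is built from the row minors $\xi^{i,j}_{2k-1,2k}$, with reciprocal $q$-weights. Since the transposition $x_{i,j}\mapsto x_{j,i}$ is an automorphism of $\AXq$ --- it interchanges the two same-row and same-column relations of Figure~\ref{X_Relations} and fixes the opposite-corner relations --- the verification of the relations and the injectivity argument for $\phi_R$ are formally the mirror images of those for $\phi_L$. I would therefore carry out the details only for the left-handed case, the right-handed statement following by this row-column symmetry.

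For injectivity I would argue by a Poincar\'e--Birkhoff--Witt (PBW) count. Orienting the displayed relations as rewriting rules --- using $z_{i,i}=0$ and the $q$-antisymmetry to discard the diagonal generators $z_{i,i}$ and the generators $z_{i,j}$ with $i>j$, and the quadratic commutation relations to reorder products --- yields a spanning set of standard monomials for $A_q(\mathcal{A})$, namely the lexicographically ordered products of the generators $z_{i,j}$ with $i<j$. Verifying that all overlap ambiguities among these quadratic rules resolve (Bergman's diamond lemma) promotes this spanning set to a basis of $A_q(\mathcal{A})$. To conclude I would show that $\phi_L$ sends this basis to a linearly independent subset of $\AXq$: fixing a monomial order on the $x_{i,j}$ for which $\AXq$ carries its own PBW basis of ordered monomials, I would extract the leading $x$-monomial of each $z_{i,j}^L$ (the top term $x_{i,2k-1}x_{j,2k}$ of the dominant minor) and check that distinct standard $z$-monomials are carried to products with distinct leading monomials, so that any linear dependence among their images must be trivial. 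Matching the diamond-lemma normal form on the $A_q(\mathcal{A})$ side against this leading-monomial count on the $\AXq$ side yields $A_q(\mathcal{A})\cong\ALq$, and by the symmetry above $A_q(\mathcal{A})\cong\ARq$.

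I expect the final independence step to be the principal obstacle. That the relations \emph{hold} is routine bookkeeping with the $\AXq$ relations; the real difficulty is showing there are \emph{no further} relations, i.e.\ that $A_q(\mathcal{A})$ is no larger than $\ALq$. This hinges on choosing the term order on the $x_{i,j}$ so that, for each generator, a single minor $\xi^{i,j}_{2k-1,2k}$ dominates the sum in \eqref{Zl}, and so that no cancellation of leading terms occurs when several generators are multiplied together in the noncommutative ring $\AXq$. Once a compatible order is fixed and diamond-lemma confluence is checked, the normal-form basis on one side matches the independent leading monomials on the other, and the theorem follows.
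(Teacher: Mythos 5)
First, a point of reference: the paper itself gives no proof of Theorem \ref{ASRelations} --- the statement is followed by the list of relations and a box, with the verification of the relations left as computation and the completeness of the relations never argued. So your proposal has to stand entirely on its own. Its first half does stand: defining $\phi_L$ on generators, verifying \eqref{AS1}--\eqref{AS7} for the concrete elements \eqref{Zl} by manipulating quantum $2$-minors, and noting that surjectivity is automatic, is the routine part, and your reduction of the theorem to injectivity is the right framing. One caveat on your symmetry reduction: the plain transpose $x_{ij}\mapsto x_{ji}$ is indeed an automorphism of $\AXq$, but it does \emph{not} carry $z^L_{i,j}$ to $z^R_{i,j}$: comparing \eqref{Zl} and \eqref{Zr}, the $q$-exponents come out with the opposite sign, so the transpose of $\ALq$ is generated by different linear combinations of the minors $\xi^{2k-1,2k}_{i,j}$ than $\ARq$ is. The symmetry you want is the anti-transpose $x_{ij}\mapsto x_{n+1-j,\,n+1-i}$, which does send $z^L_{i,j}$ to $z^R_{n+1-j,\,n+1-i}$.

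The genuine gap is your final independence step. You propose a leading-term (SAGBI-type) argument: pick a term order so that each $z^L_{i,j}$ has leading monomial $x_{i,a}x_{j,b}$ for a dominant pair of columns $\{a,b\}$, and claim that distinct standard $z$-monomials then have distinct leading $x$-monomials. This claim is false. In the associated graded (quasi-commutative) algebra of $\AXq$, the leading monomial of $z^L_{i_1j_1}\cdots z^L_{i_mj_m}$ is the commutative monomial $\prod_t x_{i_t,a}x_{j_t,b}$, which records only \emph{which} rows land in column $a$ and which in column $b$; the pairing of $i_t$ with $j_t$ is forgotten. Concretely, the two standard monomials $z_{1,3}z_{2,4}$ and $z_{1,4}z_{2,3}$ are both carried to elements with leading monomial $x_{1,a}x_{2,a}x_{3,b}x_{4,b}$, so leading terms cannot separate their images; equivalently, the $z^L_{i,j}$ do not form a SAGBI basis for any natural order, and cancellations of leading terms really occur --- relation \eqref{AS6} is precisely the shadow of such a cancellation. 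No choice of order repairs this uniformly in all degrees, so the step you yourself flagged as the principal obstacle is where the argument breaks. The standard repair is a dimension count in place of the leading-term count: (i) your rewriting argument already shows the ordered monomials span $A_q(\mathcal{A})$, giving $\dim A_q(\mathcal{A})_m \le \binom{N+m-1}{m}$ with $N=n(n-1)/2$ (the diamond lemma is then not even needed); (ii) independently of the presentation, the paper's decomposition $\ALq=\AXq^K=\bigoplus_{\lambda\in P^{\mathcal{A}}_n}V^L_q(\lambda)$ (Proposition \ref{ALandARDecomp}, whose proof does not use Theorem \ref{ASRelations}) gives $\dim(\ALq)_{2m}=\sum_{\lambda\in P^{\mathcal{A}}_n,\,|\lambda|=2m}\dim V(\lambda)$, which equals the same classical count since quantum and classical irreducibles have equal dimensions; surjectivity of $\phi_L$ then forces it to be an isomorphism degree by degree. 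Alternatively, one can invoke the straightening law for quantum Pfaffians of Strickland \cite{S}. Without one of these inputs, the proposal does not close.
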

\begin{align}z_{i,j} &= -q^{-1}z_{j,i}, \label{AS1}\\
z_{i,l}z_{j,k} &= z_{j,k}z_{i,l}, \label{AS2}\\
z_{i,j}z_{i,k} &= qz_{i,k}z_{i,j}, \label{AS3}\\
z_{i,k}z_{j,l}-&z_{j,l}z_{i,k}  =  \left(q-q^{-1}\right)z_{i,l}z_{j,k}, \label{AS6}\\
z_{i,j}z_{k,l} - &z_{k,l}z_{i,j} = \left(q-q^{-1}\right)z_{i,k}z_{j,l}
-q\left(q-q^{-1}\right)z_{i,l}z_{j,k} \label{AS7},
\end{align}
where $ i < j < k < l $.

\bigskip
Using Eq. (\ref{AS6}) we may rewrite Eq. (\ref{AS7}) as
\begin{equation}\label{AS8}z_{i,j}z_{k,l}-z_{k,l}z_{i,j} = qz_{j,l}z_{i,k} - q^{-1}z_{i,k}z_{j,l}\end{equation}
\hfill$\Box$

The definitions of these generators also imply
\begin{equation}\label{AS9}z_{i,i} =0\end{equation}

%%%%%%%%%%%%%%%%%%%%%%%%%%%%%
%Subsection  Antisymmetric and quantum antisymmetric
% matrices are isomorphic
%%%%%%%%%%%%%%%%%%%%%%%%%%%%%
\subsection{Quantum Antisymmetric Matrices}
If we denote by $\mathcal{A}$, the vector space of $n \times n$ antisymmetric matrices with basis
\begin{equation}
    B_{\mathcal{A}} = \left\{E_{i,j}-E_{j,i}\left| 1<i<j \le n  \right.  \right\}
\end{equation}
then  $dim(\mathcal{A})= n(n-1)/2$.  We observe that $Hom_{Alg}(A_q(\mathcal{A}),\mathbb{C})$
is the set of $n \times n$ matrices with restrictions imposed by the relations Eq. (\ref{AS1}) and Eq. (\ref{AS9}).
If we denote $Hom_{Alg}(A_q(\mathcal{A}),\mathbb{C})$  by $\Aq$, and treat it as a vector space
(in other words we are ignoring multiplication) we see its basis is
\begin{equation}
    B_{\Aq}= \left\{E_{i,j}-qE_{j,i}\left| 1<i<j \le n  \right.  \right\}
\end{equation}
where $dim(\Aq)= n(n-1)/2$ and we have $\Aq \simeq \mathcal{A}$ as vector spaces.
We may think of $\Aq$ as the quantum analog of the antisymmetric matrices.

%%%%%%%%%%%%%%%%%%%%%%%%%%%%%%%%%%%
%Subsection Quantum Pfaffian
%%%%%%%%%%%%%%%%%%%%%%%%%%%%%%%%%%%
 \subsection{Quantum Pfaffian} \label{Pfaffianq}
If $A = (a_{i,j}) \in Mat(2n,\mathbb{C})$ is an antisymmetric matrix, it can be written as
\begin{equation}
    A= \begin{bmatrix}
            0 & a_{1,2} & \cdots & a_{1,2n}\\
            -a_{1,2} & 0 & \cdots & a_{2,2n}\\
            \vdots & \vdots & \ddots& \vdots\\
            -a_{1,2n} & -a_{2,2n} & \cdots & 0
        \end{bmatrix}
\end{equation}
and there exists a polynomial $f$ in $\mathbb{Z}[x_{ij}]$ such that $f^2(A) =
det(A)$, \cite{Ja}. This
polynomial is called the Pfaffian, denoted $\pfc$, and we
write
\begin{equation}
    \pfc^2 (A) = det (A)
\end{equation}
Moreover, if $B = (b_{i,j}) \in Mat(2n,\mathbb{C})$ and we define $A$ by
\begin{equation}\label{ClassicalA}
    a_{i,j} = det\begin{bmatrix}b_{i,1} & b_{i,2}\\ b_{j,1} & b_{j,2} \end{bmatrix}
        +det\begin{bmatrix}b_{i,3} & b_{i,4}\\ b_{j,3} & b_{j,4} \end{bmatrix}
        + \cdots + det\begin{bmatrix}b_{i,2n-1} & b_{i,2n}\\ b_{j,2n-1} & b_{j,2n} \end{bmatrix}
\end{equation}
then $A$ is antisymmetric and we have $\pfc (A) = det(B)$, \cite{Ja}.

To construct an explicit formula for $\pfc$ we can define an index set $\Pi$, consisting of all
ordered, $2$-partitions of $2n$.  In other words,\begin{equation}
    \Pi = \left\{ (i_1,j_1)(i_2,j_2) \ldots (i_{n},j_{n}) \; ;\; i_k < j_k  \text{ and } i_k < i_{k+1} \right\}
\end{equation}
For example, if $n=4$ we have
\begin{equation}
    \Pi = \left\{ (1,2)(3,4) , (1,3)(2,4) , (1,4)(2,3) \right\}
\end{equation}
We can associate the elements of $\Pi$ with elements of the symmetric group $\mathfrak{S}_{2n}$ in the following manner
\begin{equation}
    \pi \sim \begin{bmatrix} 1& 2 & 3 & 4 & \cdots & 2n \\
                i_1 & j_1 & i_2 & j_2 & \cdots & j_{n} \end{bmatrix} \in  \mathfrak{S}_n
\end{equation}
for $\pi = \left\{ (i_1,j_1)(i_2,j_2) \ldots (i_{n},j_{n}) \right\}$.
This allows us to define $sgn(\pi)$  and $l(\pi)$.  If $A = (a_{i,j})$
is an antisymmetric matrix we can then write
\begin{equation}
    \pfc(A) =\sum_{\pi \in \Pi} sgn(\pi)a_{\pi}
    = \sum_{\pi \in \Pi} sgn(\pi) a_{i_1j_1} a_{i_2j_2} \cdots a_{i_{n},j_{n}}
\end{equation}

\begin{example}
As an example, when $2n=4$
\begin{equation}
    \pfc(A) = a_{1,2}a_{3,4} - a_{1,3}a_{2,4} + a_{1,4}a_{2,3}
\end{equation}
\end{example}

Before we construct a quantum analog of the Pfaffian, we note that the quantum antisymmetric
generators $z_{i,j}^L$ (resp. $z_{i,j}^R$), defined by Eq. (\ref{Zl})  (resp. Eq. (\ref{Zr})),
are in fact quantum analogs of Eq. (\ref{ClassicalA}).  Additionally, we have already noted
that $Z = (z^L_{i,j})$ is a quantum antisymmetric matrix with the relation $z^L_{i,j} =
-\frac{1}{q}z^L_{j,i}$ for $i<j$.  We now use the same index set $\Pi$, to define the quantum Pfaffian as
\begin{equation}
    \pfq(Z) =\sum_{\pi \in  \Pi} (-q)^{l(\pi)} z^L_{\pi}\\
    = \sum_{\pi \in  \Pi} (-q)^{l(\pi)} z^L_{i_1j_1} z^L_{i_2j_2} \cdots z^L_{i_{n},j_{n}}.
\end{equation}
%using the convention that $z^L_{\pi} = z^L_{i_1,j_1}z^L_{i_2,j_2}\cdots z^L_{i_{n/2},j_{n/2}}$.

\begin{remark} An inductive definition of quantum Phaffian was given in \cite{S}. One can show that
our definition matches with Strickland's.
\end{remark}

\begin{example}
As an example, when $2n=4$
\begin{align}
    \pfq(Z) =& \; z^L_{1,2}z^L_{3,4} - qz^L_{1,3}z^L_{2,4} + q^2 z^L_{1,4}z^L_{2,3}\end{align}
\end{example}

\begin{theorem}\label{pfaffianprop}For every positive even $2n$, $\pfq(Z) = det_q(X)$.
\end{theorem}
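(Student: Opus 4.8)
\emph{Proof proposal.} The plan is to quantize the classical identity $\pfc(BJB^{T}) = \det(B)$, where $J$ is the standard symplectic form: recall from Eq.~(\ref{ClassicalA}) that $z^{L}_{i,j}$ is precisely the quantum analogue of the entry $a_{i,j}$ of $A = BJB^{T}$ with $B = X$, each entry being a sum of $2\times 2$ minors over the consecutive column blocks $\{2k-1,2k\}$. Since there is no clean matrix multiplication available in $\AXq$, I would not try to quantize the operator identity directly; instead I would expand both sides into $\mathbb{C}$-linear combinations of products of the quantum $2$-minors $\xi^{i,j}_{2k-1,2k}$ and compare them. For the right-hand side I would use the iterated quantum Laplace expansion of $det_q(X)$ along the consecutive column pairs $\{1,2\},\{3,4\},\dots,\{2n-1,2n\}$; this writes $det_q(X)$ as a sum, over ordered partitions of the row set $\{1,\dots,2n\}$ into $n$ two-element blocks, of products $\xi^{I_1}_{1,2}\xi^{I_2}_{3,4}\cdots\xi^{I_n}_{2n-1,2n}$ weighted by explicit powers of $-q$ coming from the Laplace signs.

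Next I would substitute the definition (\ref{Zl}) of each $z^{L}_{i_m,j_m}$ into
\[
  \pfq(Z) = \sum_{\pi\in\Pi}(-q)^{l(\pi)}\,z^{L}_{i_1,j_1}z^{L}_{i_2,j_2}\cdots z^{L}_{i_n,j_n},
\]
producing a double sum over pairings $\pi\in\Pi$ and over assignments $(k_1,\dots,k_n)$ of a column block to each pair. I would separate the terms in which the $k_m$ are pairwise distinct (so that the assignment is a bijection between the $n$ row-pairs and the $n$ column blocks) from those in which some block is repeated. For the distinct-block terms the scalar prefactors collapse: since $\sum_m(i_m+j_m)=\sum_{r=1}^{2n}r = n(2n+1)$ and $\sum_m k_m = \tfrac{n(n+1)}{2}$, the product $\prod_m q^{(i_m+j_m+1-4k_m)/2}$ equals $q^{0}=1$ independently of $\pi$ and of the assignment. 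Thus each distinct-block term contributes $(-q)^{l(\pi)}$ times a product of $2$-minors, one per column block, indexed by an ordered partition of the rows into pairs --- exactly the combinatorial data appearing in the quantum Laplace expansion above.

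The proof then reduces to two points. First is the bookkeeping: I must check that the residual weight $(-q)^{l(\pi)}$, together with the internal sign structure of each quantum $2$-minor and the reordering needed to match the block order, reproduces the precise $-q$ powers of the quantum Laplace expansion of $det_q(X)$; the vanishing-prefactor computation above is the key simplification that makes this matching plausible. Second, and this is the main obstacle, I must show that all repeated-block terms cancel. Here two or more row-pairs are forced onto the same columns $\{2k-1,2k\}$, and the relevant cancellation is not term-by-term but occurs after summing over the pairings $\pi$ that differ only by re-pairing those rows. The mechanism I expect to use is a quantum Grassmann--Pl\"ucker relation among quantum $2$-minors sharing a fixed pair of columns, of the schematic form
\[
  \xi^{a,b}_{2k-1,2k}\,\xi^{c,d}_{2k-1,2k}
  - q^{\ast}\,\xi^{a,c}_{2k-1,2k}\,\xi^{b,d}_{2k-1,2k}
  + q^{\ast}\,\xi^{a,d}_{2k-1,2k}\,\xi^{b,c}_{2k-1,2k} = 0 ,
\]
whose three terms are matched, with the correct $q$-weights, by the three ways of pairing any four rows inside one block and the corresponding signs $(-q)^{l(\pi)}$. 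Verifying that the Pfaffian signs align with this quantum Pl\"ucker identity --- so that every repeated-block configuration sums to zero --- is the technical heart of the argument.

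As a consistency check and a possible alternative route, one could instead argue by induction on $n$: expand $det_q(X)$ by the quantum Laplace rule along columns $1,2$ and expand $\pfq(Z)$ by the inductive (Strickland) row expansion noted in the Remark, reducing both to the $(2n-2)$ case. The same block-bookkeeping and repeated-column cancellation resurface, so this does not bypass the main obstacle, but it does provide an independent verification of the low-rank cases ($2n=2,4$), which I would compute first in order to calibrate all signs and $q$-powers before attacking the general cancellation.
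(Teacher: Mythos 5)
Your route is genuinely different from the paper's, which never expands $det_q$ at all: there, one shows by an explicit pairing of the ordered $2$-partitions that $\pfq(Z)$ is annihilated by the right action of every $e_k$ and $f_k$, hence is simultaneously a highest and lowest weight vector for the right $\Uq$-action, hence a scalar multiple of a power of $det_q$; comparing degrees and a single coefficient finishes the proof. Parts of your plan are sound: the prefactor collapse $\prod_m q^{(i_m+j_m+1-4k_m)/2}=1$ for bijective block assignments is correct, and for $2n=4$ the repeated-block terms really do cancel via the quantum Pl\"ucker relation $\xi^{1,2}_{s,t}\xi^{3,4}_{s,t}-q\,\xi^{1,3}_{s,t}\xi^{2,4}_{s,t}+q^{2}\,\xi^{1,4}_{s,t}\xi^{2,3}_{s,t}=0$. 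The genuine gap is your first ``bookkeeping'' step: the distinct-block terms of $\pfq(Z)$ do \emph{not} reproduce the $-q$ powers of the iterated quantum Laplace expansion of $det_q$ term by term, so that step, as stated, fails already for $2n=4$.

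Concretely, take $2n=4$. In $\pfq(Z)$ the pairing $(1,2)(3,4)$, with the column blocks assigned in reversed order, contributes the term $\xi^{1,2}_{3,4}\xi^{3,4}_{1,2}$ with coefficient exactly $1$ (your collapse). In the Laplace expansion of $det_q$ along the column blocks $\{1,2\},\{3,4\}$, the same element of $\AXq$ appears as $q^{4}\,\xi^{3,4}_{1,2}\xi^{1,2}_{3,4}$; these two products are literally equal, since every generator of $\xi^{1,2}_{3,4}$ commutes with every generator of $\xi^{3,4}_{1,2}$ (antidiagonal position). The coefficient $q^{4}$ is forced: the basis monomial $x_{13}x_{24}x_{31}x_{42}$ occurs in $det_q$ with coefficient $(-q)^{4}$ and in no other member of the Laplace family. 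So the two expansions disagree term by term ($1\ne q^{4}$) and yet both equal $det_q$: the deficit $q^{4}-1$ is supplied by the \emph{other} distinct-block terms $-q\,\xi^{1,3}_{3,4}\xi^{2,4}_{1,2}$ and $q^{2}\,\xi^{1,4}_{3,4}\xi^{2,3}_{1,2}$, whose factors do not commute; straightening them into the monomial basis produces $(q-q^{-1})$-corrections (straightening exchanges columns between rows), which contribute $q^{2}-1$ and $q^{4}-q^{2}$ respectively to the coefficient of $x_{13}x_{24}x_{31}x_{42}$. Thus the identity you want is global, not term-by-term, and the same phenomenon infects the repeated-block cancellation once $2n\ge 6$: there the two factors sharing a column block are separated by factors in diagonal position, so they cannot be brought together for the Pl\"ucker relation without generating further $(q-q^{-1})$ terms. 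What is missing is a straightening calculus for products of quantum $2$-minors that tracks and cancels all of these corrections; that is the real content of the theorem on your route, and it is precisely what the paper's representation-theoretic argument is designed to bypass.
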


%\begin{proof}To show this equality, we note that $\pfq(Z)$ is simultaneously a highest
%and lowest weight vector for the right action of $\Uq$.  This  shows $\pfq(Z)$ to
%be generated by  $(det_q)$.  Another way of viewing this
%fact is to note that if $\pfq(Z)$ is annihilated by all generators of $\Uq$ then it is a basis of a
%one dimensional irreducible $\Uq$ module, $V^R_q(\lambda)$, which is realized by some
%$A(B_{-}\backslash G \; ; \; z^{\lambda})$.  But we observe that the only such modules are indexed
%by $\lambda = c\Lambda_n$ for some positive integer $c$.  Then by comparing degree and coefficients
%we will see $\pfq(Z) = det_q(X)$.
%\end{proof}\bigskip

\begin{proof}To show this equality, we will prove that $\pfq$ is simultaneously a highest and lowest weight vector for the right action of $\Uq$.  This will show $\pfq$ to be a scalar multiple of $(det_q)^c$ for some $c \in \mathbb{Z}_+$.
%One way of viewing this fact is to note that if $\pfq$ is annihilated by all generators of $\Uq$ then it is in a one dimensional irreducible $\Uq$ module, $V^R_q(\lambda)$, which is realized by $A(B_{-}\backslash G \; ; \; z^{\lambda})$ where $\lambda = c\Lambda_n$ for some positive integer $c$.  By comparing degree and coefficients we will see $\pfq(Z) = det_q(X)$.

To begin, we let $k$ be a positive integer such that $1 \le k < n$.  Since the right action of generators of $\Uq$ on products of elements of $\AXq$ can be described by  \cite{JY},
\begin{align}
	\phi \psi.e_k & =\; (\phi \otimes \psi).(e_k \otimes q^{-a_k/2} + q^{a_k/2} \otimes e_k )\\
	\phi \psi.f_k & =\; (\phi \otimes \psi).(f_k \otimes q^{-a_k/2} + q^{a_k/2} \otimes f_k )
\end{align}
we may expand this notation to describe the following right action of $e_k$ on the components of $\pfq$ as
\begin{align}
z^L_{a_1b_1}z^L_{a_2b_2}\cdots z^L_{a_{n/2}b_{n/2}}.e_k =& \; z^L_{a_1b_1}.e_k \otimes z^L_{a_2b_2}.q^{-\alpha_k/2} \otimes \cdots  \otimes z^L_{a_{n/2}b_{n/2}}.q^{-\alpha_k/2} \nonumber \\
& + z^L_{a_1b_1}.q^{\alpha_k/2} \otimes z^L_{a_2b_2}.e_k \otimes \cdots  \otimes z^L_{a_{n/2}b_{n/2}}.q^{-\alpha_k/2} \nonumber \\
& \quad \vdots \nonumber \\
& + z^L_{a_1b_1}.q^{\alpha_k/2} \otimes z^L_{a_2b_2}.q^{\alpha_k/2} \otimes \cdots  \otimes z^L_{a_{n/2}b_{n/2}}.e_k
\end{align}
and
\begin{align}
z^L_{a_1b_1}z^L_{a_2b_2}\cdots z^L_{a_{n/2}b_{n/2}}.f_k = & \; z^L_{a_1b_1}.f_k \otimes z^L_{a_2b_2}.q^{-\alpha_k/2} \otimes \cdots  \otimes z^L_{a_{n/2}b_{n/2}}.q^{-\alpha_k/2} \nonumber \\
& + z^L_{a_1b_1}.q^{\alpha_k/2} \otimes z^L_{a_2b_2}.f_k \otimes \cdots  \otimes z^L_{a_{n/2}b_{n/2}}.q^{-\alpha_k/2} \nonumber \\
& \quad \vdots \nonumber \\
& + z^L_{a_1b_1}.q^{\alpha_k/2} \otimes z^L_{a_2b_2}.q^{\alpha_k/2} \otimes \cdots  \otimes z^L_{a_{n/2}b_{n/2}}.f_k
\end{align}

Additionally, each of these $\ALq$ generators is a sum of quantum $2$-minor determinants (see Eq. (\ref{Zl})) in which the indices $i$ and $j$ of $z^L_{ij}$ define the rows for each of these quantum $2$-minor determinants .  As such, the right action of $e_k$ and $f_k$ on these generators can be described by  the following,
\begin{align}\label{Z_e_rightaction}
	z^L_{i,j}.e_k &= q^{-1/2}\left( \delta_{i,k}z^L_{k+1,j} + \delta_{j,k} z^L_{j,k+1}\right),\\
\label{Z_f_rightaction}
	z^L_{i,j}.f_k &= q^{1/2}\left( \delta_{i,k+1}z^L_{k,j} + \delta_{j,k+1} z^L_{j,k}\right)
\end{align}
and the right action of $q^{\alpha/2}$ and $q^{-\alpha/2}$ are described by
\begin{align}\label{Z_qplus_rightaction}
	z^L_{i,j}.q^{\alpha_k/2} &= q^{1/2(\delta_{i,k}-\delta_{i,k+1}+\delta_{j,k}-\delta_{j,k+1})}z^L_{i,j},\\
\label{Z_qminus_rightaction}
	z^L_{i,j}.q^{-\alpha_k/2} &= q^{1/2(-\delta_{i,k}+\delta_{i,k+1}-\delta_{j,k}+\delta_{j,k+1})}z^L_{i,j}.
\end{align}
For example
\begin{equation}
	z^L_{3,4}.q^{\alpha_4/2} = q^{1/2}z^L_{3,4}
\end{equation}

Before we give a detailed description of the action of $e_k$ on $\pfq$, we show how the components of $\Pi$ may be paired, relative to the value of $k$.  Since the components of $\pfq$ are indexed by all ordered $2$-partitions, this will allow us to group the components of $\pfq$ in a way that the right action of $e_k$ (and $f_k$) will annihilate the pairs.

We first fix $k \in \mathbb{Z}$ such that $1 \le k < n$.  Now if we choose any of the ordered $2$-partitions, say $\pi = (a_1,b_1)(a_2,b_2)\cdots(a_{n},b_{n})$,  it must have an index $r$, containing $k$ and an index $s$ containing $k+1$.  In other words, there exist $r$ and $s$ such that
\begin{equation}
	k \in (a_r,b_r) \quad\mbox{and}\quad
	k+1 \in (a_s,b_s)
\end{equation}
This fixes $r$ and $s$.  Also contained in the $(a_r,b_r)$ and $(a_s,b_s)$ pairs are two other integers, $u$ and $v$ such that $u < v$. If it happens that $r = s$, in other words, there exists $(a_r,b_r)$ such that $(a_r,b_r) = (k,k+1)$ then we will not pair it with another $2$-partition.  We will show later how the right action of $e_k$ and $f_k$ already annihilate it.

\begin{example}\label{PartitionExampleA}
Suppose $2n = 8$ and we fix $k = 5$.  One of the ordered $2$-partitions of $\Pi$ is $(1,3)(2,6)(4,8)(5,7)$.  In this case we see that $r = 4$ and $s = 2$.  We then designate $u=2$ and $v=7$.
\end{example}

Now, with $r$ and $s$ still fixed, and for the designated $u$ and $v$, there are precisely three possibilities describing how $k$, $k+1$, $u$ and $v$ can be ordered.  These are:
\begin{align}
	k < k+1 < u < v\\
	u < k < k+1 < v\\
	u < v < k < k+1
\end{align}

For each of these possibilities we have the following,
%As such, there exists exactly one other $2$-partition in $\Pi$, associated with this first partition.  Together, these pairs are identical except for the $r^{th}$ and $s^{th}$ part which appear according to one of the previous three circumstances,
\begin{itemize}
	\item $k < k+1 < u < v$
	
	In this case, if $r \ne s$, there is another $2$-partition, $\hat{\pi}$  identical to $\pi$ except in the $r^{th}$ and $s^{th}$ pairs, $u$ and $v$ are switched.
	\begin{align}
	\pi = & \; (a_1,b_1) \cdots (k,u) (k+1,v) \cdots (a_{n},b_{n})\\
	\hat{\pi} = & \;(a_1,b_1) \cdots (k,v) (k+1,u) \cdots (a_{n},b_{n})
	\end{align}
  	If $r = s$ then we have
	\begin{align}
	\pi = & \; (a_1,b_1) \cdots (k,k+1)\cdots (u,v) \cdots (a_{n/2},b_{n/2})
	\end{align}
	
  \item $u < k < k+1 < v$

  In this case, if $r \ne s$, there is a second partion $\hat{\pi}$ identical to $\pi$ except in the $r^{th}$ and $s^{th}$ pairs, $k$ and $k+1$ are switched.
  \begin{align}
  	\pi = & \; (a_1,b_1) \cdots (u,k) \cdots (k+1,v) \cdots (a_{n},b_{n})\\
	\hat{\pi} = & \;(a_1,b_1) \cdots (u,k+1) \cdots (k,v) \cdots (a_{n},b_{n})
  \end{align}
  	If $r = s$ then we have
	\begin{align}
	\pi = & \; (a_1,b_1) \cdots (u,v)\cdots (k,k+1) \cdots (a_{n},b_{n})
	\end{align}

  \item $u < v < k < k+1$

  In this case, if $r \ne s$,  there is a second partion $\hat{\pi}$ identical to $\pi$ except in the $r^{th}$ and $s^{th}$ pairs, $k$ and $k+1$ are switched.
  \begin{align}
	\pi = & \; (a_1,b_1) \cdots (u,k) \cdots (v,k+1) \cdots (a_{n},b_{n})\\
	\hat{\pi} = & \;(a_1,b_1) \cdots (u,k+1) \cdots (v,k) \cdots (a_{n},b_{n})
  \end{align}
  	If $r = s$ then we have
	\begin{align}
	\pi = & \; (a_1,b_1) \cdots (u,v)\cdots (k,k+1) \cdots (a_{n},b_{n})
	\end{align}

\end{itemize}
\begin{example}
Continuing with the previous example (Example \ref{PartitionExampleA}), with $2n = 8$, $k = 5$ and $2$-partition $(1,3)(2,6)(4,8)(5,7)$, the other $2$-partition with which this would be paired is $(1,3)(2,5)(4,8)(6,7)$.
\end{example}
Using this construction, we see that after fixing $k$, we may exhaustively list all of the ordered $2$-partitions of $\Pi$, identifying each $2$-partition as containing a pair $(k,k+1)$ or as being one of the pairs just described.

This allows us to  write $\pfq$ as a sum of components of the form
\begin{equation}
	(-q)^* z^L_{a_1,b_1} \cdots z^L_{k,k+1} \cdots z^L_{a_{n},b_{n}}
\end{equation}
or which appear in pairs such as
\begin{align}
	(-q)^* z^L_{a_1,b_1} \cdots z^L_{k,u}  z^L_{k+1,v} \cdots z^L_{a_{n},b_{n}} \nonumber \\
	(-q)^{*+1} z^L_{a_1,b_1} \cdots z^L_{k,v} z^L_{k+1,u} \cdots z^L_{a_{n},b_{n}}
\end{align}
or
\begin{align}
	(-q)^* z^L_{a_1,b_1} \cdots z^L_{u,k} \cdots z^L_{k+1,v} \cdots z^L_{a_{n},b_{n}} \nonumber \\
	(-q)^{*+1} z^L_{a_1,b_1} \cdots z^L_{u,k+1} \cdots z^L_{k,v} \cdots z^L_{a_{n},b_{n}}
\end{align}
or
\begin{align}
	(-q)^* z^L_{a_1,b_1} \cdots z^L_{u,k} \cdots z^L_{v,k+1} \cdots z^L_{a_{n},b_{n}} \nonumber \\
	(-q)^{*+1} z^L_{a_1,b_1} \cdots z^L_{u,k+1} \cdots z^L_{v,k} \cdots z^L_{a_{n},b_{n}},
\end{align}
where $(-q)^{*}$ represents an appropriate power of $(-q)$ determined by $(a_1b_1)(a_2b_2)\cdots(a_{n}b_{n})$.  The right action of $e_k$ can now be calculated. In the first case,  we have the index that contains $(k,k+1)$ and we have

\begin{align}
	q^*  z^L_{a_1,b_1} &\cdots z^L_{k,k+1} \cdots z^L_{a_{n},b_{n}} . e_k \nonumber \\
	=& \; (z^L_{a_1,b_1}.e_k) \cdots (z^L_{k,k+1}.q^{-\alpha_k/2}) \cdots (z^L_{a_{n},b_{n}}.q^{-\alpha_k/2}) \nonumber \\
	& +\; (z^L_{a_1,b_1}.q^{\alpha_k/2}) \cdots (z^L_{k,k+1}.e_k) \cdots (z^L_{a_{n},b_{n}}.q^{-\alpha_k/2}) \nonumber \\
	& +\; (z^L_{a_1,b_1}.q^{\alpha_k/2}) \cdots (z^L_{k,k+1}.q^{\alpha_k/2}) \cdots (z^L_{a_{n},b_{n}}.e_k) \nonumber \\
	=& \; (0) \cdots (z^L_{k,k+1}.q^{-\alpha_k/2}) \cdots (z^L_{a_{n},b_{n}}.q^{-\alpha_k/2}) \nonumber \\
	& +\; (z^L_{a_1,b_1}.q^{\alpha_k/2}) \cdots (0) \cdots (z^L_{a_{n},b_{n}}.q^{-\alpha_k/2}) \nonumber \\
	& +\; (z^L_{a_1,b_1}.q^{\alpha_k/2}) \cdots (z^L_{k,k+1}.q^{\alpha_k/2}) \cdots (0) \nonumber \\
	= & \; 0
\end{align}

In the next case, with the indexes of the paired $2$-partitions containing $(k,u) (k+1,v)$ and $(k,v) (k+1,u)$, the right action of $e_k$ can be seen to be zero as well, by using Eq. (\ref{Z_e_rightaction}), Eq. (\ref{Z_qplus_rightaction}), and Eq. (\ref{Z_qminus_rightaction}). In fact all remaining cases are treated
similarly, and we get that
\begin{equation}
	\pfq.e_k = 0
\end{equation}
A similar argument shows
\begin{equation}
	\pfq.f_k = 0
\end{equation}
%Since $\pfq$ is an element of $\AXq$ annihilated by the right action of all $e_k$ and $f_k$, $1 \le k < n$, $\pfq$ must be generated by $det_q$.  By comparing degree and coefficients, we see $\pfq(Z) = det_q(X)$.
Since $\pfq$ is an element of $\AXq$ annihilated by the right action of all $e_k$ and $f_k$, $1 \le k < n$, $\pfq$ must be generated by $det_q$.  By comparing degree and coefficients, we see $\pfq(Z) = det_q(X)$.

\end{proof}

%This is used later in the decomposition proof of the next Section
We extend the notation slightly and define

\begin{equation}\label{PfaffianI}
    \pfq(Z) ^{I} = \sum_{\pi \in \Pi^I} (-q)^{l(\pi)} z^L_{\pi}
\end{equation}
where $I = \{1,2, \ldots, r\}$,  $r<n$, $r$ is even and $\Pi^{I}$ is the set of ordered $2$-partitions of $I$. The proof above also shows that  $\pfq(Z) ^{I}$ is annihilated on the right by all $f_k$ ($k<n$) and by all $e_k$ except for $r<k<n$.  As such $\pfq(Z) ^{I}$ is still a highest weight vector under the right action of  $\Uq$ and, because it is an element constructed from left symplectic invariant generators, it  provides a realization of an element in $\AXq^K \cap A(B_{-}\backslash G ; z^{\Lambda_r})$.
\subsection{Decomposition of ${}^K\AXq)$ and $\AXq^K$}

We show the decomposition of ${}^K\AXq)$ as a right $\AGq$-comodule (resp. left $\Uq$-module) and the
decomposition of $\AXq)^K$ as a left $\AGq$-comodule (resp. right $\Uq$-module.  To perform this
decomposition, several preliminary propositions are presented, along with the introduction of
some notational conventions.  First some notation:

We define the map $\phi$ from the power set of $\{1,2,3,\ldots,n\}$ into the power set of $\{1,2,3,\ldots,n\}$ by
%\begin{equation}
%   \mathcal{P}\{1,2,3,\ldots,n\} \xrightarrow{\phi} \mathcal{P}\{1,2,3,\ldots,n\} \nonumber
%\end{equation}
\begin{equation}
    \phi(A) = \bigcup_{\alpha \in A}\{ 2\alpha-1, 2\alpha  \}
\end{equation}
for example $\phi(\{ 1,3,4,5\}) = \{1,2,5,6,7,8,9,10\}$.
We will use  $\phi$  to construct indices for the rows and columns of quantum minor determinants used
in $q$-symplectic invariants and then to describe a specific set of dominant weights as
\begin{equation}\label{PAn}
    P_n^{\mathcal{A}} = \left\{\lambda \in P_n \; ; \; \lambda = (\mu_1, \mu_1, \mu_2, \mu_2, \ldots , \mu_{n}, \mu_{n}), \; \mu \in P_{n} \right\}
\end{equation}
For example     $(4,4,4,4,3,3,2,2,2,2,1,1) \in P^{\mathcal{A}}_{12}$.

One of the key ideas used in the decomposition of ${}^K\AXq$ and
$\AXq^K$ is presented in the following proposition (cf. \cite{JY}).

\begin{proposition}\label{reciprocityprop}
Let $\mu \in P_n$ be a dominant integral weight and  $V^R_q(\mu)$ be the irreducible left $\Uq$
submodule with highest weight $\mu$.  Then the space of the $q$-symplectic invariants in $V^R_q$
has the dimension equal to the multiplicity of $V$ in ${}^K\AXq)$.
\end{proposition}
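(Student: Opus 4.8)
The plan is to prove this as a quantum Frobenius reciprocity, exploiting the two commuting $\Uq$-actions on $\AXq$ together with its complete reducibility, exactly in the spirit of the orthogonal case treated in \cite{JY}. First I would invoke the fact, recalled in the duality discussion above, that $\AXq$ is a completely reducible two-sided $\Uq$-module, and record its isotypic decomposition as a bimodule,
\begin{equation*}
    \AXq \;\cong\; \bigoplus_{\mu \in P_n} V^R_q(\mu) \otimes V^L_q(\mu),
\end{equation*}
where the left $\Uq$-action operates on the factor $V^R_q(\mu)$ and the right $\Uq$-action on the multiplicity factor $V^L_q(\mu)$. Equivalently, for each $\mu$ the multiplicity space $\mathrm{Hom}_{\Uq}\!\bigl(V^R_q(\mu),\AXq\bigr)$ for the left action carries the commuting right $\Uq$-action and is isomorphic, as a right $\Uq$-module, to $V^L_q(\mu)$; this is the statement that $V^R_q(\mu)$ occurs in $\AXq$ with multiplicity $\dim V^L_q(\mu)$.

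Next I would intersect with ${}^K\AXq$. The right symplectic generators $sp_e(i,j)$ and $sp_f(i,j)$ act through the right $\Uq$-action, which commutes with the left action defining the isotypic pieces, so imposing right $\Uspq$-invariance touches only the multiplicity factor. Since ${}^K\AXq$ is a left $\Uq$-submodule of the completely reducible module $\AXq$, it is itself completely reducible and its multiplicities may be read off componentwise, giving
\begin{equation*}
    {}^K\AXq \;\cong\; \bigoplus_{\mu \in P_n} V^R_q(\mu) \otimes {}^K\!\bigl(V^L_q(\mu)\bigr),
\end{equation*}
where ${}^K(V^L_q(\mu))$ denotes the right $q$-symplectic invariants in $V^L_q(\mu)$. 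Hence the multiplicity of $V^R_q(\mu)$ in ${}^K\AXq$ equals $\dim {}^K(V^L_q(\mu))$.

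The remaining, and most delicate, step is to identify $\dim {}^K(V^L_q(\mu))$ with the dimension of the $q$-symplectic invariants $\bigl(V^R_q(\mu)\bigr)^K$ attached to the left action. Here $V^R_q(\mu)$ and $V^L_q(\mu)$ are two contragredient realizations of the irreducible of highest weight $\mu$, one a left and the other a right $\Uq$-module, so I would use the non-degenerate pairing between them coming from the $\AGq$--$\Uq$ duality, under which the left action transposes to the right action; right $sp$-invariants of $V^L_q(\mu)$ then pair to the annihilator of the image of the $sp$-operators acting on $V^R_q(\mu)$, whose dimension equals $\dim \bigl(V^R_q(\mu)\bigr)^K$ by complete reducibility. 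The main obstacle is precisely that $\Uspq$ is not a Hopf subalgebra of $\Uq$, so this transposition is not supplied automatically by an antipode; it must be verified directly against the explicit generators $sp_e$, $sp_f$, $sp_h$ of (\ref{gen_e})--(\ref{gen_h}), checking that the transpose carries the right-annihilation conditions precisely onto the left-annihilation conditions. Once this matching of invariant dimensions is in place, the chain of equalities yields the asserted equality of the multiplicity with $\dim \bigl(V^R_q(\mu)\bigr)^K$, proving the proposition.
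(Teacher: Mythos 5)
Your proposal is correct and follows essentially the same route as the paper: both arguments reduce the multiplicity of $V^R_q(\mu)$ in ${}^K\AXq$ to the dimension of the right $q$-symplectic invariants in the multiplicity factor $V^L_q(\mu)$ --- you via the isotypic bimodule decomposition of $\AXq$, the paper via counting singular weight vectors, which lie in ${}^K\AXq \cap A(G/B_{+};z^{\mu})$ and hence form exactly that invariant space --- and both conclude with the duality between $V^L_q(\mu)$ and $V^R_q(\mu)$. Your explicit flagging of the delicate point in that last step (that $\Uspq$ is a coideal rather than a Hopf subalgebra, so the transposition of the annihilation conditions must be checked on the generators $sp_e$, $sp_f$) is a caveat the paper's own proof passes over in one sentence.
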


\begin{proof} To decompose the algebra ${}^K\AXq)$ as a right $\AGq$-comodule
(or left $\Uq$ module), it suffices to find the singular weight vectors, i.e. the weight
vectors $\phi \in {}^K\AXq$ such that $e_k.\phi = 0$ for $k= 1, \ldots, n-1$.   Since such
a singular vector $\varphi$ is contained in the space ${}^K\AXq \cap A(X/B^{+}; z^{\lambda})$
for some dominant integral weigh $\lambda \in L_n$, and generates an irreducible right
$\AGq$-comodule with highest weight $\lambda$. Thus if there are $m_{\lambda}$ singular
weight vectors of weight $\lambda$ in ${}^K\AXq$, then the irreducible right $\AGq$-comodule
isomorphic to $V^R_q(\lambda)$ occurs $m_{\lambda}$ times in the decomposition of ${}^K\AXq$.
On the other hand, a singular vector $\varphi$ in ${}^K\AXq \cap A(X/B^{+}; z^{\lambda})$
is regarded as a left $q$-symplectic invariant in $V^L_q$ (i.e. annihilated on left).
Since  $V^L_q(\lambda)$ and $V^R_q(\lambda)$ are dual to each other, the dimension of the
space of $q$-symplectic invariants coincides.
\end{proof} \bigskip

%The following input files cover respectively the existence of the specific V(\lambda) modules in the decomposition of the quantum antisymmetric space when \lambda is in P^{\mathcal{A}}_{n}, the nonexistence of V(\lambda) when \lambda is not in P^{\mathcal{A}}_{}, and then the uniqueness of the included modules.

% Existence for \lambda
Next, we show by construction, the existence of a left invariant in the left $\Uq$-module $A(B_{-}\backslash G ;z^{\lambda})$.
We build this left invariant from elements of the following form
\begin{equation}\label{aR}
    a_r^R = \sum_J q^{-2|J|}\xi^{1,\ldots,2r}_{\phi(J)}
\end{equation}
where the sum is over all $J$ such that $\#J = r$ and $J \subseteq \{1, 2, \ldots, n \}$.
$|J|$ represents the sum of the elements of $J$.
As such $a_r^R \in \AXq^K \cap A(B_{-}\backslash G ; z^{\lambda_r})$.

\begin{lemma}(Existence)\label{ExistenceLemma}
For $\lambda = \sum \limits_{r=1}^{n} m_{2r}\Lambda_{2r}$, i.e, $\lambda \in P^{\mathcal{A}}_n$,
$A(B_{-}\backslash G;z^{\lambda})$ contains a left $q$-symplectic invariant.
\end{lemma}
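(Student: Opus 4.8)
The plan is to build the required invariant as an ordered product of the elements $a_r^R$ introduced just above the lemma. Writing the weight as $\lambda = \sum_{r=1}^{n} m_{2r}\Lambda_{2r}$ with every $m_{2r}\in\mathbb{Z}_{\ge 0}$, I would set
$$\varphi_\lambda \;=\; (a_1^R)^{m_2}\,(a_2^R)^{m_4}\cdots(a_n^R)^{m_{2n}} \ \in\ \AXq,$$
the factors taken in increasing order of $r$ (the order will turn out to be immaterial). The claim is that $\varphi_\lambda$ is a nonzero element of $\AXq^K \cap A(B_-\backslash G; z^{\lambda})$, that is, a left $q$-symplectic invariant of weight $z^{\lambda}$, which is exactly the assertion of the lemma.

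Two of the three points are formal. As established above each $a_r^R$ lies in $\AXq^K$, and $\AXq^K$ is a subalgebra of $\AXq$, so the product $\varphi_\lambda$ is again annihilated on the left by all $sp_e(i,j)$ and $sp_f(i,j)$; hence it is a left $q$-symplectic invariant. For the relative-invariant property I would use that $(\pi_{B_-}\otimes id)\circ\Delta$ is an algebra homomorphism, being a composite of the coproduct with $\pi_{B_-}\otimes id$. Since each factor satisfies $(\pi_{B_-}\otimes id)\Delta(a_r^R) = z^{\Lambda_{2r}}\otimes a_r^R$, applying this homomorphism to $\varphi_\lambda$ produces a tensor whose left leg is the product $\prod_{r}(z^{\Lambda_{2r}})^{m_{2r}}$ in $A(B_{-})$. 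The diagonal generators $z_{11},\dots,z_{nn}$ of $A(B_{-})$ commute, so this leg collapses to $z^{\sum_r m_{2r}\Lambda_{2r}}=z^{\lambda}$, yielding
$$(\pi_{B_-}\otimes id)\Delta(\varphi_\lambda) \;=\; z^{\lambda}\otimes \varphi_\lambda,$$
which is precisely membership in $A(B_-\backslash G; z^{\lambda})$.

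The real content, and the step I expect to be the main obstacle, is nonvanishing: in the noncommutative ring $\AXq$ a product of nonzero elements need not be nonzero, so I must rule out collapse. I would argue with leading monomials in the ordered $q$-monomial basis of $\AXq$. In $a_r^R = \sum_J q^{-2|J|}\xi^{1,\dots,2r}_{\phi(J)}$ the choice $J=\{1,\dots,r\}$ gives the principal minor $\xi^{1,\dots,2r}_{1,\dots,2r}$, whose diagonal monomial $x_{1,1}x_{2,2}\cdots x_{2r,2r}$ occurs in no other summand, since distinct $J$ yield distinct column sets $\phi(J)$ (any $J\ne\{1,\dots,r\}$ forces a column index exceeding $2r$); thus $a_r^R\ne 0$ and carries this diagonal monomial as leading term. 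Because the relations of $\AXq$ are $q$-commutation relations, leading monomials of a product multiply up to a nonzero power of $q$, so the leading monomial of $\varphi_\lambda$ is the nonzero monomial $\prod_{r}(x_{1,1}x_{2,2}\cdots x_{2r,2r})^{m_{2r}}$, whence $\varphi_\lambda\ne 0$. Alternatively, one may invoke that $\AXq$ is an iterated Ore extension and hence an integral domain, so that a product of the nonzero elements $a_r^R$ cannot vanish. Combining the three points establishes the existence statement.
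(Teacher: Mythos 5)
Your proof is correct and takes essentially the same approach as the paper: the paper's proof consists precisely of defining $a^R_{\lambda}=\prod_{r=1}^{n}\left(a^R_r\right)^{m_{2r}}$ and asserting that, by construction, it lies in $\AXq^K \cap A(B_{-}\backslash G ; z^{\lambda})$. Your explicit verifications of the relative-invariance via the algebra map $(\pi_{B_-}\otimes id)\circ\Delta$ and of nonvanishing (via leading monomials or the domain property of $\AXq$) merely supply details that the paper compresses into ``we see by its construction.''
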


\begin{proof}
Suppose $\lambda = \sum \limits_{r=1}^{n} m_{2r} \Lambda_{2r} $, then we define
\begin{equation}\label{aRlambda}
    a^R_{\lambda} =\prod_{r=1}^{n}\left(a^R_r \right)^{m_{2r}}
\end{equation}
where $a^R_r$ is defined by Eq. (\ref{aR}).  We see by its construction, $a^R_{\lambda} \in
\AXq^K \cap A(B_{-}\backslash G ; z^{\lambda})$.  As such, each right $\AGq$-comodule
$V^R_q(\lambda)$ has a $q$-symplectic invariant.
\end{proof}

\begin{lemma}(Nonexistence)\label{NonExistenceLemma}
There does not exist a left $q$-symplectic invariant in the irreducible right $\Uq$-submodule
$V_q^L(\lambda)$ if $\lambda \notin P^{\mathcal{A}}_n$.
\end{lemma}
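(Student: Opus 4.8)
The plan is to reduce the statement to the non-vanishing of a single lowering operator on the highest weight vector of $V_q^L(\lambda)$. First I would exploit that the left and right $\Uq$-actions on $\AXq$ commute. The space of left $q$-symplectic invariants inside $V_q^L(\lambda)$ is, by definition, the intersection of the kernels of the left actions of the $sp_e(i,j)$ and $sp_f(i,j)$; since each such left operator commutes with the right $\Uq$-action, every one of these kernels — and hence their intersection — is a right $\Uq$-submodule of $V_q^L(\lambda)$. But $V_q^L(\lambda)$ is irreducible as a right $\Uq$-module, so this invariant subspace is either $0$ or all of $V_q^L(\lambda)$. Because the highest weight vector $v_\lambda$ is a nonzero (hence cyclic) vector for the right action, a nonzero left symplectic invariant exists in $V_q^L(\lambda)$ if and only if $v_\lambda$ is itself annihilated by the left action of every symplectic generator.

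It therefore suffices, for nonexistence, to exhibit one symplectic generator whose left action does not kill $v_\lambda$. Realized inside $A(G/B_+;z^\lambda)$, the vector $v_\lambda$ is the element of left (column) weight $\lambda=(\lambda_1,\lambda_2,\dots)$ that is annihilated by every left $e_k$ — concretely a product of leading principal quantum minors — as one checks directly from $e_k.x_{ij}=x_{i,j-1}\delta_{j,k+1}$. I would then single out the diagonal generator $sp_f(i,i)=E_{2i,2i-1}=f_{2i-1}$, whose left action $f_k.x_{ij}=x_{i,j+1}\delta_{j,k}$ raises the column index $2i-1$ to $2i$.

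The crucial computation is an $\mathfrak{sl}_2$-string estimate. The generators $e_{2i-1}$ and $f_{2i-1}$ span a copy of $U_q(\mathfrak{sl}_2)$ acting on the left, and $v_\lambda$ is a highest weight vector for it (it is killed by $e_{2i-1}$) of weight $\langle\lambda,\alpha_{2i-1}\rangle=\lambda_{2i-1}-\lambda_{2i}\ge 0$. Since $\AXq$ is completely reducible as a left $\Uq$-module and $q$ is not a root of unity, $v_\lambda$ generates under this $U_q(\mathfrak{sl}_2)$ the irreducible finite-dimensional module of that highest weight, whence $f_{2i-1}.v_\lambda=0$ if and only if $\lambda_{2i-1}=\lambda_{2i}$. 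Now if $\lambda\notin P^{\mathcal{A}}_n$ then, $\lambda$ being weakly decreasing, some pair satisfies $\lambda_{2i-1}>\lambda_{2i}$; for that index $sp_f(i,i).v_\lambda=f_{2i-1}.v_\lambda\neq 0$, so $v_\lambda$ is not a left symplectic invariant, and by the first paragraph $V_q^L(\lambda)$ contains no nonzero left $q$-symplectic invariant.

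The hard part is the rigorous justification of the $\mathfrak{sl}_2$-string conclusion $f_{2i-1}.v_\lambda\neq 0$ when $\lambda_{2i-1}>\lambda_{2i}$: one must confirm that $v_\lambda$ really is a highest weight vector generating a genuine type-$1$ integrable $U_q(\mathfrak{sl}_2)$-module of the predicted highest weight, which is where complete reducibility of $\AXq$ and the hypothesis that $q$ is not a root of unity enter. The supporting technical points — identifying the left weight of the minor product $v_\lambda$ as $\lambda$ and verifying $e_k.v_\lambda=0$ for all $k$ — are routine from the explicit left action formulas; once they are in place, the equivalence $f_{2i-1}.v_\lambda=0\iff\lambda_{2i-1}=\lambda_{2i}$ delivers exactly the doubling condition defining $P^{\mathcal{A}}_n$.
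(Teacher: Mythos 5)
Your $\mathfrak{sl}_2$-string computation (that $f_{2i-1}.v_\lambda=0$ if and only if $\lambda_{2i-1}=\lambda_{2i}$) is correct and is exactly the calculation this lemma turns on; the problem is that your Schur-type reduction aims that calculation at the wrong space, and this is a genuine gap. The dichotomy ``left invariants in $V_q^L(\lambda)$ are $0$ or everything'' is valid precisely because you pair left invariance with the \emph{right} module structure, so what it tests is whether the one distinguished copy $A(G/B_{+};z^{\lambda})$ (the span of column-extremal vectors, whose right highest weight vector is the product of principal minors $v_\lambda$) is contained in $\AXq^K$. But the statement, as the paper proves and uses it (its proof decomposes $\AXq^K$ into irreducible right submodules, and the result feeds Proposition \ref{ExistenceAndUniqueness} and the decomposition theorem), must exclude \emph{every} irreducible right $\Uq$-submodule of highest weight $\lambda$ consisting of left invariants; equivalently it must show $\AXq^K\cap A(B_{-}\backslash G;z^{\lambda})=0$, since the highest weight vector of a component of $\AXq^K$ is a left-invariant element of $A(B_{-}\backslash G;z^{\lambda})$, and in general it is \emph{not} $v_\lambda$. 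Concretely: the span of the generators $z^L_{i,j}$ is an irreducible right submodule of $\AXq$ lying entirely inside $\AXq^K$, with $\lambda=\Lambda_2\in P^{\mathcal{A}}_n$, and its extremal vector is $z^L_{1,2}=\sum_k q^{2-2k}\xi^{1,2}_{2k-1,2k}$, not $\xi^{1,2}_{1,2}$. Your argument never interrogates vectors of this kind, because the reduction only ever tests the single vector $v_\lambda$.

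A symptom that the reading is off: your dichotomy proves that $A(G/B_{+};z^{\lambda})$ contains no nonzero left invariant for \emph{every} $\lambda\neq 0$, including $\lambda\in P^{\mathcal{A}}_n$ — for such $\lambda$ the vector $v_\lambda$ does survive the diagonal generators, but not the off-diagonal ones, e.g.\ $sp_f(1,2).\xi^{1,2}_{1,2}=-q^{-2}\,\xi^{1,2}_{2,4}\neq 0$. So under your interpretation the lemma carries no information about $P^{\mathcal{A}}_n$ and cannot be combined with Lemmas \ref{ExistenceLemma} and \ref{UniquenessLemma}, which concern invariants in $A(B_{-}\backslash G;z^{\lambda})$, where for $\lambda\in P^{\mathcal{A}}_n$ the invariant space is one-dimensional — neither zero nor everything, so no Schur dichotomy can possibly hold there (the symplectic generators act through the same left action under which that space is irreducible, so their kernels are not submodules). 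The paper's wording ``$V_q^L(\lambda)$'' invites your reading, but its proof fixes the intended one. The mechanism you are missing, which the paper supplies (most explicitly inside the proof of Lemma \ref{UniquenessLemma}), is a leading-term argument: expand a putative left invariant $u\in A(B_{-}\backslash G;z^{\lambda})$ into left-weight components; invariance under $sp_e(i,i)=e_{2i-1}$ and under the leading term $e_{2i}$ of $sp_f(i,i+1)$ forces the maximal-weight component to be $c\,v_\lambda$ with $c\neq 0$; then $0=f_{2i-1}.u$ forces $f_{2i-1}.v_\lambda=0$ by weight separation. At that point your string computation — the part of your proposal worth keeping — yields $\lambda_{2i-1}=\lambda_{2i}$ for all $i$, i.e.\ $\lambda\in P^{\mathcal{A}}_n$. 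Replace the Schur reduction by this triangularity argument and your proof becomes a correct (indeed cleaner-than-the-paper's) proof of the intended statement.
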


\begin{proof}
$\AXq^K$ is a right $\Uq$-submodule of $\AXq$.  As such, it has its own decomposition into
irreducible right $\Uq$-submodules indexed by
$\lambda \in P_n$ , where $\lambda$ is a dominant integral weight
\begin{equation}
    \AXq^K = \bigoplus\limits_{\lambda} V_q^L(\lambda)
\end{equation}
Each $V_q^L(\lambda)$ is a highest weight module  \cite{NYM93}.
Each of these highest weight modules has a
realization of $A(G/B^{+}; z^{\lambda})$ with highest weight vector of the form
\begin{equation}\label{highestweight}
v_{\lambda} = \left(\xi_{1,\ldots,s}^{1,\ldots,s}\right)^{m_s}
\left(\xi_{1,\ldots,s-1}^{1,\ldots,s-1} \right)^{m_{s-1}}
\cdots \left(\xi_{1}^{1} \right)^{m_{1}}
\end{equation}
However, because, the elements of $\AXq^K$ are annihilated on the left by all $e_k$ and $f_k$
where $k$ is odd, then for the highest weight vector $v_{\lambda}$,
it must be true that $\lambda = \sum\limits_{r=1}^n m_r\Lambda_r$ where $m_r =  0$ when $r$ is odd.  Thus,
\begin{equation}
    \AXq^K = \bigoplus\limits_{\lambda \in P^{\mathcal{A}}_n} V_q^L(\lambda)
\end{equation}
\end{proof} \bigskip

\begin{lemma}(Uniqueness)\label{UniquenessLemma}
The multiplicity of  $V_q^R(\lambda)$ an irreducible right $\Uq$-module, in the decomposition
of $\AXq^K$ is exactly one.
\end{lemma}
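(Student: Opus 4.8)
The plan is to deduce the lemma from the reciprocity of Proposition \ref{reciprocityprop} (and its left--right dual statement) together with the Existence Lemma \ref{ExistenceLemma}. By that reciprocity the multiplicity of an irreducible $V_q^R(\lambda)$ in the invariant subalgebra equals the dimension of the space of $q$-symplectic invariants sitting inside the \emph{single} irreducible module $V_q^R(\lambda)$. The Existence Lemma already produces the explicit invariant $a^R_\lambda \in \AXq^K \cap A(B_{-}\backslash G;z^{\lambda})$, so this dimension is at least one; the entire content of the uniqueness statement is therefore the reverse inequality, namely that for $\lambda \in P_n^{\mathcal{A}}$ the irreducible $V_q^R(\lambda)$ carries \emph{at most} a one-dimensional space of $q$-symplectic invariants. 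This is the quantum analog of the classical fact that $(GL(2n,\mathbb{C}),\SP)$ is a Gelfand pair, and once it is in hand the multiplicity is pinned to exactly one.

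To prove the bound $\dim (V_q^R(\lambda))^K \le 1$, I would argue by specialization at $q=1$. Fix $\lambda \in P_n^{\mathcal{A}}$. The defining conditions $sp_e(i,j).v = 0$ and $sp_f(i,j).v = 0$ for all $1\le i,j \le n$ form a finite homogeneous linear system on the finite-dimensional weight-graded module $V_q^R(\lambda)$, and, since the generators $sp_e(i,j), sp_f(i,j)$ are built from the $E_{r,s}$ as Laurent polynomials in $q$, the matrix of this system has entries in $\mathbb{Z}[q,q^{-1}]$ once a $q$-independent basis of weight vectors is chosen. Because $\dim V_q^R(\lambda)$ is independent of $q$ (for $q$ not a root of unity) and equals $\dim V(\lambda)$, the family is flat, so the rank of the system is lower semicontinuous in $q$ and its nullity at generic $q$ is bounded above by its nullity at $q=1$. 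At $q=1$ the operators $sp_e(i,j),sp_f(i,j)$ degenerate to the root vectors spanning $\mathfrak{sp}(2n,\mathbb{C})$, so the $q=1$ nullity is the dimension of the space of $\SP$-invariants in $V(\lambda)$, which by the classical Hua--James theory is exactly one for $\lambda \in P_n^{\mathcal{A}}$. Hence $\dim (V_q^R(\lambda))^K \le 1$, and with Existence Lemma \ref{ExistenceLemma} the multiplicity is exactly one.

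The unique invariant can moreover be written down: for $\lambda = \sum_r m_{2r}\Lambda_{2r}$ it is, up to a nonzero scalar, the element $a^R_\lambda = \prod_r (a^R_r)^{m_{2r}}$, which by the discussion following Theorem \ref{pfaffianprop} is a product of quantum Pfaffians $\prod_r(\pfq(Z)^{\{1,\ldots,2r\}})^{m_{2r}}$. I expect the main obstacle to be the justification of $\dim (V_q^R(\lambda))^K \le 1$ rather than its companion lower bound: in the specialization route the delicate points are (i) choosing a $q$-independent weight basis so that the invariance equations genuinely form a flat family over $\mathbb{C}[q,q^{-1}]$, and (ii) verifying that the $q\to 1$ limit of $sp_e,sp_f$ is exactly the classical symplectic action, so that lower semicontinuity of the rank transfers the classical multiplicity-one statement to generic $q$. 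An alternative, purely quantum route would avoid specialization by showing directly that any $q$-symplectic invariant of weight $\lambda$ in $A(B_{-}\backslash G;z^{\lambda})$ --- being simultaneously a relative (lowest-weight) vector and annihilated by all $sp_e(i,j), sp_f(i,j)$ --- must be proportional to $a^R_\lambda$ by comparing coefficients through the relations of $A_q(\mathcal{A})$ in Theorem \ref{ASRelations}; there the obstacle is the combinatorial bookkeeping of the Pfaffian-type coefficients.
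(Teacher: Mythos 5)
Your first step (reducing, via Proposition \ref{reciprocityprop} and Lemma \ref{ExistenceLemma}, to the bound $\dim\bigl(V_q^R(\lambda)\bigr)^K \le 1$) coincides with the paper's, but your route to that bound --- specialization at $q=1$ plus the classical Hua--James multiplicity-one theorem for the pair $\left(GL(2n,\mathbb{C}),\SP\right)$ --- is genuinely different from the paper's, and as written it has a real gap. Rank semicontinuity gives only this: the rank of your invariance system at a particular value $q_0$ is at most its rank over $\mathbb{C}(q)$, with equality precisely when $q_0$ lies off the common vanishing locus of the maximal minors, which is some finite set of algebraic numbers. Comparing with $q=1$ then yields: generic nullity $\le$ nullity at $q=1$ $=1$, and nullity at $q_0$ equals the generic nullity only for $q_0$ \emph{outside} that exceptional set. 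Nothing in your argument shows the exceptional set consists solely of roots of unity, whereas the paper's standing hypothesis is that $q$ is an \emph{arbitrary} complex number that is not a root of unity; at an exceptional non-root-of-unity value the nullity could a priori jump above $1$ (semicontinuity goes the wrong way there, and the Existence Lemma only reinforces the lower bound). So for a fixed admissible $q$ the lemma is not established. Your acknowledged point (i) --- the existence of a flat $\mathbb{Z}[q^{\pm 1/2}]$-form of $V_q(\lambda)$ on which the matrices of $sp_e(i,j)$, $sp_f(i,j)$ are regular at $q=1$ and specialize to the classical symplectic action --- is also asserted rather than proved, though that part is standard and citable.

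The paper's own proof avoids specialization entirely and works pointwise in $q$: writing an invariant $v^K \in A(B_{-}\backslash G;z^{\lambda})$ as a sum of weight vectors, take a component $\eta$ of maximal weight. Annihilation by $sp_e(i,i)=e_{2i-1}$ forces $e_{2i-1}.\eta=0$ for all odd indices; for the even indices one uses the expansion $sp_f(i,i+1)=e_{2i}+q^{-2}\left(f_{2i-1}f_{2i}f_{2i+1}-\cdots+f_{2i+1}f_{2i}f_{2i-1}\right)$, in which the cubic $f$-terms strictly lower the weight and therefore cannot cancel the top-weight term $e_{2i}.\eta$; hence $e_{2i}.\eta=0$ as well. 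Thus $\eta$ is a singular vector of the irreducible module $A(B_{-}\backslash G;z^{\lambda})$, so $\eta=cv_{\lambda}$ with $v_\lambda$ the unique highest weight vector. Consequently every nonzero invariant has a nonzero $v_\lambda$-component, and a suitable linear combination of any two invariants would be an invariant with vanishing $v_\lambda$-component, hence zero --- which is exactly multiplicity one. This is close in spirit to the ``alternative purely quantum route'' you sketch at the end, except that the comparison happens at the level of the leading weight component rather than through the relations of $A_q(\mathcal{A})$, which makes the bookkeeping trivial. To salvage your approach you would need to prove that the exceptional locus of the specialization argument contains only roots of unity; otherwise you should replace it by a pointwise argument of this leading-term type.
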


\begin{proof}As mentioned earlier, by Proposition \ref{reciprocityprop},
the multiplicity of $V_q^R(\lambda)$ in the decomposition of $\AXq^K$ is
equal to the number of left $q$-symplectic invariants in $A(B_{-}\backslash G;z^{\lambda})$.
Let $v^K$ be a non zero left invariant in
$A(B_{-}\backslash G;z^{\lambda})$, as such,  it can be written as a linear combination of
weight vectors from the standard basis of
$A(B_{-}\backslash G;z^{\lambda})$  \cite{NYM93}.

However, since $A(B_{-}\backslash G;z^{\lambda})$ is a highest weight vector space,
there must be at least one basis (weight) vector,
$\eta$, in the composition of $v^K$ for which there are no higher weight vectors in $v^K$.
In other words
\begin{equation}
    v^K = \eta \oplus v_1 \oplus \cdots v_j
\end{equation}
where the weights of $v_1, \ldots , v_j$ are less than or equal to that of $\eta$.
As such, $\eta$ must be annihilated by all $e_k$, where $k < n$ and $k$ is odd.
Additionally, the elements of the form
\begin{align}
sp_f(i,i+1) =& \; e_{2i} + q^{-2}\left( f_{2i-1}f_{2i}f_{2i+1} - f_{2i}f_{2i-1}f_{2i+1}\right. \nonumber \\
    & \left. - f_{2i+1}f_{2i-1}f_{2i} + f_{2i+1}f_{2i}f_{2i-1} \right)
\end{align}
where $1 \le i < n$, also annihilate $v^K$, and this in turn requires that $\eta$
also be annihilated by all $e_k$, where  $k < n$ and $k$ is even.
Therefore $\eta$ must be a highest weight vector of $A(B_{-}\backslash G;z^{\lambda})$,
but this vector is unique up to constant multiple,
because $A(B_{-}\backslash G;z^{\lambda})$ is a highest weight module.  So
\begin{equation}
    \eta =  cv_{\lambda}, \quad c \in \mathbb{C}
\end{equation}
where $v_{\lambda}$ is defined by Eq. (\ref{highestweight}).  This tells us that
any non-zero left $q$-symplectic invariant in
$A(B_{-}\backslash G;z^{\lambda})$ must be written as
\begin{equation}
    cv_{\lambda} \oplus w_1 \oplus \cdots w_j, \quad c  \in \mathbb{C}, c \ne 0
\end{equation}
 where $w_1, \ldots, w_j$ are lower weight vectors of $A(B_{-}\backslash G;z^{\lambda})$.

Now assume there is more than one left quantum $q$-symplectic invariant in
$A(B_{-}\backslash G;z^{\lambda})$, say $v^K$ and $w^K$.
Each of these may be written as a sum of standard basis elements, each including a
non-zero term for the highest weight vector $v_{\lambda}$.
In other words, they may be written as
\begin{equation}
    v^K = c_0v_{\lambda} +c_1v_1 + c_2v_2 + \cdots +c_iv_i,\quad c_0 \ne 0
\end{equation}
\begin{equation}
    w^K = k_0v_{\lambda} +d_1v_1 + d_2v_2 + \cdots +d_jv_j, \quad k_0 \ne 0
\end{equation}
Since the linear combination of any left $q$-symplectic invariant is also a left
$q$-symplectic invariant then it must be
true that $k_0v^K - c_0w^K$ is also a left $q$-symplectic invariant in
$A(B_{-}\backslash G;z^{\lambda})$.
If $k_0v^K - c_0w^K \ne 0$ then we have a contradiction to the requirement that
any left $q$-symplectic invariant
in $A(B_{-}\backslash G;z^{\lambda})$ has a nonzero $v_{\lambda}$ component.
On the other hand, if $k_0v^K - c_0w^K = 0$
then $w^K$ is a constant multiple of $v^K$.  Therefore, any left $q$-symplectic
invariant in $A(B_{-}\backslash G;z^{\lambda})$
is unique up to a constant multiple. \end{proof} \bigskip

%Proposition of Existence and Uniqueness
The following proposition summarizes Lemmas \ref{ExistenceLemma}, \ref{NonExistenceLemma},
and \ref{UniquenessLemma}.
\begin{proposition}\label{ExistenceAndUniqueness}
The space of $q$-symplectic invariants in the right $\AGq$-comodule
$V^R_q(\mu)$ is one dimensional
if and only if $\mu = \sum \limits_{r=1}^{n} m_{2r}\Lambda_{2r}$, in other words, $\mu \in P^{\mathcal{A}}_n$.
Otherwise there are no $q$-symplectic invariants in $V^R_q$
\end{proposition}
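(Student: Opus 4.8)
The plan is to obtain the statement as a direct synthesis of the three preceding lemmas, with Proposition \ref{reciprocityprop} serving as the bridge that converts the question about invariant dimensions into a question about comodule multiplicities. First I would invoke Proposition \ref{reciprocityprop} to rewrite the dimension of the space of $q$-symplectic invariants in $V^R_q(\mu)$ as the multiplicity with which the corresponding irreducible appears in the decomposition of $\AXq^K$ (equivalently ${}^K\AXq$), using the duality between $V^L_q(\mu)$ and $V^R_q(\mu)$ established there. This reduces the entire claim to a single multiplicity computation.

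Second, I would settle which weights contribute. Lemma \ref{NonExistenceLemma} shows that no left $q$-symplectic invariant lives in $V^L_q(\lambda)$ unless $\lambda \in P^{\mathcal A}_n$, so by reciprocity the multiplicity, and hence the invariant dimension, vanishes for every $\mu \notin P^{\mathcal A}_n$. Conversely, Lemma \ref{ExistenceLemma} produces the explicit invariant $a^R_{\lambda}$ of Eq. (\ref{aRlambda}) inside $A(B_{-}\backslash G;z^{\lambda})$ for each $\lambda \in P^{\mathcal A}_n$, so on this set the multiplicity is at least one.

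Third, I would pin the multiplicity to exactly one by appealing to Lemma \ref{UniquenessLemma}, which guarantees that any nonzero left invariant in $A(B_{-}\backslash G;z^{\lambda})$ is a scalar multiple of one distinguished vector. Combining ``at least one'' from existence with ``at most one'' from uniqueness yields multiplicity exactly one for $\mu \in P^{\mathcal A}_n$; translating back through Proposition \ref{reciprocityprop} makes the invariant space one dimensional there, and together with the vanishing for $\mu \notin P^{\mathcal A}_n$ this is precisely the asserted dichotomy.

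The only genuine care point---rather than a real obstacle---is bookkeeping the two dualities at once: the lemmas are phrased on the $\AXq^K$ side in terms of $V^L_q(\lambda)$ and $A(B_{-}\backslash G;z^{\lambda})$, whereas the proposition is stated for $V^R_q(\mu)$. I would make explicit that the singular-vector correspondence in Proposition \ref{reciprocityprop} identifies these two counts, so that the index set $P^{\mathcal A}_n$ matches on both sides and the ``if and only if'' comes out clean in each direction.
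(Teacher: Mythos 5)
Your proposal is correct and matches the paper's approach exactly: the paper gives no separate proof, stating only that the proposition ``summarizes'' Lemmas \ref{ExistenceLemma}, \ref{NonExistenceLemma}, and \ref{UniquenessLemma}, and your synthesis---existence giving at least one invariant, uniqueness capping the multiplicity at one, nonexistence handling $\mu \notin P^{\mathcal{A}}_n$, with Proposition \ref{reciprocityprop} translating between invariant dimensions and comodule multiplicities---is precisely that summary spelled out. Your attention to the left/right duality bookkeeping is a welcome addition, since the paper leaves that identification implicit.
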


By Proposition \ref{reciprocityprop} we may then summarize our results with the following theorem
\begin{theorem}
The irreducible decomposition of $\AXq^K$ as a right $\Uq$-module is given by
    \begin{equation}
    \AXq^K = \bigoplus_{\lambda \in P^{\mathcal{A}}_{n}}V^L_q(\lambda)
    \end{equation}
similarly ${}^K\AXq$, as a left $\Uq$-module has the irreducible decomposition
    \begin{equation}
    {}^K\AXq = \bigoplus_{\lambda \in P^{\mathcal{A}}_{n}}V^R_q(\lambda)
    \end{equation}
Where $P^{\mathcal{A}}_{n}$ is defined by Eq. (\ref{PAn}).
%Each irreducible component has a unique $q$-zonal polynomial.
\end{theorem}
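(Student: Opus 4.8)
The plan is to assemble the preceding results rather than to carry out any new computation; the theorem is a bookkeeping consequence of complete reducibility together with the reciprocity statement and the existence/uniqueness lemmas already in hand. First I would record that $\AXq$ is completely reducible both as a left and as a right $\Uq$-module, and that $\AXq^K$ (resp. ${}^K\AXq$) is a right (resp. left) $\Uq$-submodule of it. Hence each of these submodules is itself completely reducible and admits a decomposition
\begin{equation}
    {}^K\AXq = \bigoplus_{\mu \in P_n} V^R_q(\mu)^{\oplus m_\mu}, \qquad \AXq^K = \bigoplus_{\mu \in P_n} V^L_q(\mu)^{\oplus m'_\mu}
\end{equation}
into isotypic components indexed by dominant integral weights, with finite multiplicities $m_\mu, m'_\mu \in \mathbb{Z}_{\ge 0}$. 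All that remains is to determine these multiplicities.

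Second, for ${}^K\AXq$ I would invoke the reciprocity Proposition~\ref{reciprocityprop}, which identifies the multiplicity $m_\mu$ of $V^R_q(\mu)$ with the dimension of the space of $q$-symplectic invariants sitting inside the irreducible $V^R_q(\mu)$. Then Proposition~\ref{ExistenceAndUniqueness}—which packages the Existence Lemma~\ref{ExistenceLemma}, the Nonexistence Lemma~\ref{NonExistenceLemma}, and the Uniqueness Lemma~\ref{UniquenessLemma}—states precisely that this invariant space is one-dimensional when $\mu \in P^{\mathcal{A}}_n$ and zero otherwise. Thus $m_\mu = 1$ for $\mu \in P^{\mathcal{A}}_n$ and $m_\mu = 0$ for $\mu \notin P^{\mathcal{A}}_n$, giving the stated decomposition of ${}^K\AXq$.

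Third, for $\AXq^K$ I would run the mirror-image argument. The Nonexistence Lemma~\ref{NonExistenceLemma} already shows directly that only weights $\lambda \in P^{\mathcal{A}}_n$ can occur, since its proof restricts the admissible highest-weight vectors to those of the form $v_\lambda$ in Eq.~(\ref{highestweight}); the Uniqueness Lemma~\ref{UniquenessLemma} forces each such multiplicity to be exactly one; and the Existence Lemma~\ref{ExistenceLemma}, via the explicit invariants $a^R_\lambda$, guarantees that each $V^L_q(\lambda)$ with $\lambda \in P^{\mathcal{A}}_n$ genuinely appears. Alternatively, because $V^L_q(\lambda)$ and $V^R_q(\lambda)$ are dual (as used in the proof of Proposition~\ref{reciprocityprop}), the left-side count $m'_\lambda$ is forced to coincide with the right-side count $m_\lambda$, again yielding $m'_\lambda = 1$ exactly for $\lambda \in P^{\mathcal{A}}_n$.

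Since the nontrivial combinatorial and representation-theoretic work is already contained in the cited lemmas, I do not expect a serious obstacle here. The one point that requires care is keeping the left and right $\Uq$-module structures consistently matched: one must ensure that the invariants computed inside $V^R_q(\mu)$ through the reciprocity correspond to the correct isotypic component of ${}^K\AXq$, and symmetrically that the left invariants built in Lemma~\ref{ExistenceLemma} land in the intended component of $\AXq^K$. This is purely a matter of tracking the duality $V^L_q(\lambda) \leftrightarrow V^R_q(\lambda)$; once that identification is fixed, both displayed decompositions follow at once.
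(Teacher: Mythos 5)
Your proposal is correct and follows essentially the same route as the paper: the paper presents this theorem as a direct summary obtained by combining the reciprocity Proposition \ref{reciprocityprop} (multiplicity of $V^R_q(\mu)$ in ${}^K\AXq$ equals the dimension of the invariant space in $V^R_q(\mu)$) with Proposition \ref{ExistenceAndUniqueness}, which packages the Existence, Nonexistence, and Uniqueness Lemmas into the statement that this dimension is one precisely for $\mu \in P^{\mathcal{A}}_n$ and zero otherwise. Your mirror-image treatment of $\AXq^K$, including the observation that the Nonexistence Lemma's proof already exhibits that decomposition directly, matches the paper's implicit argument.
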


\begin{proposition}\label{ALandARDecomp}
The space $\ALq = \AXq^K$, (resp. ${}^K\AXq = \ARq$).  As such, $\ALq$ (resp. $\ARq$)
also have the decompositions as a right (resp. left) $\Uq$-modules,
    \begin{align}
    \ALq = \bigoplus_{\lambda \in P^{\mathcal{A}}_{n}}V^L_q(\lambda) \\
    \ARq = \bigoplus_{\lambda \in P^{\mathcal{A}}_{n}}V^R_q(\lambda)
    \end{align}
\end{proposition}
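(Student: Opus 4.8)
The plan is to combine the multiplicity-free decomposition of $\AXq^K$ established in the preceding theorem with the Pfaffian constructions of Section \ref{Pfaffianq}. Since the generators $z^L_{i,j}$ lie in $\AXq^K$, we already have $\ALq \subseteq \AXq^K$, and $\ALq$ is a right $\Uq$-submodule (left $\AGq$-subcomodule) of the completely reducible module $\AXq^K$. Because the decomposition $\AXq^K = \bigoplus_{\lambda \in P^{\mathcal{A}}_n} V^L_q(\lambda)$ is multiplicity free, every such submodule is the direct sum of a subfamily of the $V^L_q(\lambda)$; hence it suffices to prove that $\ALq$ meets each isotypic component, i.e.\ that $\ALq$ contains a nonzero singular (highest weight) vector of each weight $\lambda \in P^{\mathcal{A}}_n$.

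First I would produce these singular vectors from the extended Pfaffians. For each admissible even $2r$, the element $\pfq(Z)^{\{1,\ldots,2r\}}$ of Eq.\ (\ref{PfaffianI}) lies in $\ALq$ by construction, and it was shown in Section \ref{Pfaffianq} to be a singular vector of weight $\Lambda_{2r}$ realizing an element of $\AXq^K \cap A(B_{-}\backslash G; z^{\Lambda_{2r}})$. Writing $\lambda \in P^{\mathcal{A}}_n$ as in Lemma \ref{ExistenceLemma}, $\lambda = \sum_r m_{2r}\Lambda_{2r}$, I set
\begin{equation}
    P_{\lambda} = \prod_{r}\left(\pfq(Z)^{\{1,\ldots,2r\}}\right)^{m_{2r}} \in \ALq .
\end{equation}
For any raising generator $g \in \{e_k,f_k\}$ one has, through the coproduct $\Delta_U$, the skew-derivation rule $\varphi\psi.g = (\varphi.g)(\psi.q^{-\alpha_k/2}) + (\varphi.q^{\alpha_k/2})(\psi.g)$, so a product of singular vectors is again singular; thus $P_\lambda$ is singular, and its weight is the additive sum $\sum_r m_{2r}\Lambda_{2r} = \lambda$. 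By the uniqueness result (Lemma \ref{UniquenessLemma}), the space of singular vectors of weight $\lambda$ in $\AXq^K$ is one dimensional and (by Lemma \ref{ExistenceLemma}) nonzero, so as soon as $P_\lambda \neq 0$ it spans that space and generates the copy of $V^L_q(\lambda)$; consequently $V^L_q(\lambda) \subseteq \ALq$.

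The hard part is precisely the nonvanishing of $P_{\lambda}$. I would settle it by passing to the classical limit $q \to 1$, where $\ALq$ degenerates to the commutative algebra of polynomial functions on antisymmetric matrices, an integral domain. There each factor $\pfq(Z)^{\{1,\ldots,2r\}}$ specializes to the classical Pfaffian of the leading $2r\times 2r$ block of the antisymmetric matrix built from $X$ as in Eq.\ (\ref{ClassicalA}); this is a nonzero polynomial, as one sees by setting the columns beyond the $2r$-th to zero and invoking the classical identity $\pfc=\det$ on the resulting $2r\times 2r$ block. A product of nonzero elements in an integral domain is nonzero, so $P_\lambda\ne 0$ at $q=1$, and therefore $P_\lambda\neq 0$ for generic $q$, in particular for our $q$ which is not a root of unity. (Equivalently, one may track the term of highest column weight in the minor expansion of $P_\lambda$: it is $\prod_r(\xi^{1,\ldots,2r}_{1,\ldots,2r})^{m_{2r}}$ up to a monomial in $q$, which is the nonzero vector $v_{\lambda}$ of Eq.\ (\ref{highestweight}).) Granting $P_\lambda\neq 0$ for all $\lambda \in P^{\mathcal{A}}_n$ yields $\AXq^K = \bigoplus_{\lambda \in P^{\mathcal{A}}_n} V^L_q(\lambda) \subseteq \ALq$, which with the reverse inclusion gives $\ALq = \AXq^K$; the claimed $\Uq$-module decomposition of $\ALq$ is then immediate from the preceding theorem.

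Finally, the identity ${}^K\AXq = \ARq$ follows by the symmetric argument, replacing the $z^L_{i,j}$ by the $z^R_{i,j}$ and the left action by the right one, using the decomposition ${}^K\AXq = \bigoplus_{\lambda \in P^{\mathcal{A}}_n} V^R_q(\lambda)$ and the right-handed Pfaffians; alternatively it follows from the transpose/duality symmetry interchanging the two sides. The only delicate point is again the nonvanishing of the corresponding Pfaffian products, which is handled identically.
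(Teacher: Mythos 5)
Your proposal is correct and takes essentially the same route as the paper: the inclusion $\ALq \subseteq \AXq^K$ is definitional, and the reverse inclusion comes from the extended Pfaffians $\pfq(Z)^{I}$ of Eq.\ (\ref{PfaffianI}), whose products furnish the (unique up to scalar) singular vector of each weight $\lambda \in P^{\mathcal{A}}_{n}$, so that every summand $V^L_q(\lambda)$ of $\AXq^K$ lies in $\ALq$. The paper's own proof is only a brief sketch of this argument; your write-up simply makes explicit the product construction $P_\lambda$, the fact that a product of singular vectors is singular, and the nonvanishing of $P_\lambda$, all of which the paper leaves implicit.
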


\begin{proof}From its definition, we already have $\ALq \subseteq \AXq^K$.
The elements, ${Pf_q}^I$ described by
Eq. (\ref{PfaffianI}) provide a formula for explicitly constructing a left $\Uspq$ invariant in
$A(B_{-}\backslash G; z^{\lambda})$ for any $\lambda$.    As such, $\AXq^K \subseteq \ALq$,
and we have $\ALq = \AXq^K$.
\end{proof}

%%%%%%%%%%%%%%%%%%%%%%%%%%%
% Biinvariants
%%%%%%%%%%%%%%%%%%%%%%%%%%%
\section{Bi-invariants}
In this section we define a subalgebra of $\AXq$ by the intersection of $\ARq$ and $\ALq$.  Defined in this way,
this space is annihilated on the left and right by $\Uspq$.  We then proceed to show that this algebra is really
$\mathbb{C}[s_1, \ldots, s_{n}]^{\mathfrak{S}_{n}}$, the symmetric algebra of $n$ variables.  To start, we define $A_{ZP}$,  as
\begin{equation}\label{A_ZP}
A_{ZP} = \ARq \cap \ALq = \bigoplus_{m=0}^{\infty} A_{ZP,2m},
\end{equation}
Recall, the polynomials of $\ARq$ and $\ALq$ have even degree so it has the natural grading into the subspaces $A_{ZP,2m}$.

Now we define
\begin{equation}
    E_r = \sum_{I,J} q^{2(|I|-|J|)}\xi^{\phi(I)}_{\phi(J)}, \quad 1 \le r \le n
\end{equation}
where the summation runs over all subsets $I$ and $J$ of $\{1, \ldots, \frac{n}{2} \}$ and $\#I = \#J = r$.
Here, $|I|$ and $|J|$ are the sums of the elements of $I$ and $J$ respectively.

\begin{lemma}$E_r \in A_{ZP,2r}$
\end{lemma}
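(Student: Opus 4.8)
The plan is to verify the two defining conditions for membership in $A_{ZP,2r}$ separately: that $E_r$ is homogeneous of degree $2r$, and that it is annihilated by $\Uspq$ on both sides, so that it lies in $\ALq \cap \ARq = A_{ZP}$ in degree $2r$.

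The degree count is immediate. Each term $\xi^{\phi(I)}_{\phi(J)}$ is a quantum minor of size $\#\phi(I) = 2\#I = 2r$, hence a homogeneous polynomial of degree $2r$ in the generators $x_{ab}$. Therefore $E_r$ is homogeneous of degree $2r$, and it remains only to establish $E_r \in \ALq$ and $E_r \in \ARq$, after which $E_r \in A_{ZP,2r}$ follows from the grading in Eq.~(\ref{A_ZP}).

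For left invariance, i.e. $E_r \in \AXq^K = \ALq$, I would first record the structural fact that the left $\Uq$-action alters only the column (lower) index of a minor. For the odd generators $sp_e(i,i) = e_{2i-1}$ and $sp_f(i,i) = f_{2i-1}$ the annihilation is automatic term by term: since $\phi(J)$ is a union of adjacent pairs $\{2j-1,2j\}$, we have $2i-1 \in \phi(J)$ if and only if $2i \in \phi(J)$, so the conditions $2i \in \phi(J),\ 2i-1\notin\phi(J)$ required for $e_{2i-1}$ to act nontrivially, and $2i-1\in\phi(J),\ 2i\notin\phi(J)$ required for $f_{2i-1}$, can never be met; hence every $\xi^{\phi(I)}_{\phi(J)}$ is killed by all odd $e_k,f_k$. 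For the even generators $sp_e(i,i+1), sp_f(i,i+1)$ term-by-term annihilation fails and the weights must be used: rewriting $E_r = \sum_I q^{2|I|}\bigl(\sum_J q^{-2|J|}\xi^{\phi(I)}_{\phi(J)}\bigr)$ exhibits the inner sum as the element $a_r^R$ of Eq.~(\ref{aR}) with its fixed row set replaced by $\phi(I)$. Because the left action leaves the upper index $\phi(I)$ inert and operates only on the column data, the cancellation already established for $a_r^R$ in the Existence Lemma applies verbatim for each fixed $I$; thus each inner sum lies in $\AXq^K$, and $E_r$, a $\mathbb{C}$-linear combination of them, lies in $\AXq^K = \ALq$.

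Right invariance, i.e. $E_r \in {}^K\AXq = \ARq$, follows by the symmetric argument, since the right $\Uq$-action modifies only the row (upper) index: writing $E_r = \sum_J q^{-2|J|}\bigl(\sum_I q^{2|I|}\xi^{\phi(I)}_{\phi(J)}\bigr)$ presents $E_r$ as a combination of the row-summed analogues of $a_r^R$ with fixed column set $\phi(J)$, each of which is right $\Uspq$-invariant by the same computation with rows and columns interchanged. Combining the two sides gives $E_r \in \ALq \cap \ARq = A_{ZP}$, which together with the degree count yields $E_r \in A_{ZP,2r}$. The main obstacle is the even-generator cancellation in the column sum; I expect to dispatch it not by a fresh calculation but by reducing to the Existence Lemma, the only point needing care being the verification that that lemma's argument is genuinely independent of the fixed row set — which it is, precisely because the left action never touches the upper indices (and dually for the right action and the lower indices).
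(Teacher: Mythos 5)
Your proof is correct, and it differs from the paper's at precisely the one step where something nontrivial must be checked. The paper, like you, isolates the component of $E_r$ with $I=\{1,\ldots,r\}$ and identifies it (up to the scalar $q^{2|I|}$) with $a_r^R$, hence left invariant; but it then handles the remaining components structurally: it asserts that the components with other row sets $\phi(I)$ are obtained from $a_r^R$ by the \emph{right} action of $\Uq$, and invokes the bimodule property (that $\AXq^K$ is a right $\Uq$-submodule of $\AXq$, the two actions commuting) to conclude they stay left invariant; right invariance is then argued symmetrically and the degree count finishes, exactly as in your write-up. You instead observe that the left action is blind to row indices --- concretely, $a.\xi^{R}_{C} = \sum_{K} \xi^{R}_{K}\, a(\xi^{K}_{C})$, so the structure constants depend only on the column sets --- and therefore the cancellation proving $a_r^R \in \AXq^K$ is verbatim row-set independent, making each inner sum $\sum_J q^{-2|J|}\xi^{\phi(I)}_{\phi(J)}$ left invariant for every fixed $I$. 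What each approach buys: the paper's argument treats the invariance computation for $a_r^R$ as a black box and needs only soft module theory, but it rests on the unproved claim that the other components of $E_r$, with their precise coefficients, really lie in the right $\Uq$-module generated by $a_r^R$; your argument needs no such claim, at the mild cost of opening the black box far enough to record the row-independence, which is a clean structural fact about the coaction. Your explicit term-by-term disposal of the odd generators $sp_e(i,i)=e_{2i-1}$, $sp_f(i,i)=f_{2i-1}$ (the column conditions $2i\in\phi(J)$, $2i-1\notin\phi(J)$ and vice versa can never hold) is a bonus the paper leaves implicit. Note that both arguments ultimately lean on the paper's earlier unproved assertion that $a_r^R \in \AXq^K$, so neither is self-contained on that point.
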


\begin{proof}If we examine the component of $E_r$ that is obtained by holding
$I$ fixed at $I = \{1,2,\ldots,r\}$,
we see that this component is precisely $a^R_r$, defined in Eq. (\ref{aR}).  As such,
this component is
invariant under the left action of $\Uspq$.  The remaining components of $E_r$
(the components obtained by fixing $I$ at other values)
can be obtained by the right action of $\Uq$ on $a^R_r$.  Since $\AXq^K$ is a right
submodule of $\AXq$ these other
components of $E_r$ must also be left invariant.  Thus, $E_r \in \AXq^K$.  Similarly,
we see that the component of $E_r$
 associated with the fixed $J = \{1,2,\ldots,n\}$ is in ${}^K\AXq$ and likewise the
 other components of $E_r$ can be
 obtained by the left action of $\Uq$.  Thus, $E_r \in {}^K\AXq$.  Since $E_r$ has
 degree $2r$ (by its construction)
 and $E_r \in \ARq \cap \ALq$, it follows that $E_r \in A_{ZP,2r}$.
\end{proof}

\begin{theorem}
The algebra $A_{ZP}$ is generated by  $E_{r} (1 \le r \le n)$ and the algebra $A_{ZP}$ is isomorphic to the
algebra of symmetric polynomials of $n$ variables;
\begin{equation}
    \pi : A_{ZP} \; \tilde{\rightarrow} \; \mathbb{C}[s_1, \ldots , s_{n}]^{\mathfrak{S}_{n}}
\end{equation}
\end{theorem}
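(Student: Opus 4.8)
The plan is to identify the promised isomorphism $\pi$ with the restriction-to-torus homomorphism $\pi_H\colon \AGq \to A(H_n)$, and to reduce everything to a leading-term (highest weight) computation built on the decomposition results already in hand. First I would record the structural input: combining the decomposition $\AXq^K=\bigoplus_{\lambda\in P^{\mathcal A}_n}V^L_q(\lambda)$ with Proposition \ref{ExistenceAndUniqueness} (and its right-handed counterpart via Proposition \ref{reciprocityprop}), each irreducible constituent $V_q(\lambda)$ with $\lambda\in P^{\mathcal A}_n$ meets the bi-invariant space $A_{ZP}=\ARq\cap\ALq$ in a one-dimensional subspace, spanned by a vector $\Phi_\lambda$ whose top weight is $\lambda$. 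Hence $A_{ZP}=\bigoplus_{\lambda\in P^{\mathcal A}_n}\mathbb{C}\,\Phi_\lambda$, and under the bijection $\lambda=(\mu_1,\mu_1,\mu_2,\mu_2,\dots)\leftrightarrow\mu$ of Eq. (\ref{PAn}) the degree-$2m$ piece $A_{ZP,2m}$ has dimension equal to the number of partitions $\mu$ of $m$, matching the degree-$m$ part of the symmetric polynomial ring.

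Next I would compute $\pi_H$ on the generators. Since $\pi_H(x_{ij})=\delta_{i,j}t_i$ is an algebra map, a quantum minor collapses to $\pi_H(\xi^{I}_{J})=\delta_{I,J}\,t_I$ with $t_I=\prod_{i\in I}t_i$, because only the identity permutation survives when $I=J$ and the minor vanishes otherwise. Applying this to $E_r=\sum_{I,J}q^{2(|I|-|J|)}\xi^{\phi(I)}_{\phi(J)}$ kills every cross term $I\neq J$ (as $\phi(I)=\phi(J)\Leftrightarrow I=J$) and leaves the diagonal terms with $q^{2(|I|-|J|)}=1$, so that
\begin{equation}
\pi_H(E_r)=\sum_{\#I=r}\ \prod_{\alpha\in I}t_{2\alpha-1}t_{2\alpha}=e_r(s),\qquad s_i:=t_{2i-1}t_{2i},
\end{equation}
the $r$-th elementary symmetric polynomial in the variables $s_i$. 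Because the $e_r(s)$ generate the full ring of symmetric polynomials, once I know the $E_r$ generate $A_{ZP}$ it will follow that $\pi_H(A_{ZP})=\mathbb{C}[s_1,\dots,s_n]^{\mathfrak S_n}$.

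The core of the argument is therefore to show that the $E_r$ generate $A_{ZP}$ and are algebraically independent, which I would do by a triangularity argument against the basis $\{\Phi_\lambda\}$. The top-weight term of $E_r$ is its diagonal summand $\xi^{1,\dots,2r}_{1,\dots,2r}$, since $\phi(\{1,\dots,r\})=\{1,\dots,2r\}$; consequently the monomial $E_1^{a_1}E_2^{a_2}\cdots$ has top-weight term a scalar multiple of $\prod_r(\xi^{1,\dots,2r}_{1,\dots,2r})^{a_r}$, which is exactly the highest weight vector $v_\lambda$ of Eq. (\ref{highestweight}) for $\lambda=\sum_r a_r\Lambda_{2r}$. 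Since the assignment $(a_r)\mapsto\sum_r a_r\Lambda_{2r}$ is a bijection from nonnegative-integer tuples onto $P^{\mathcal A}_n$ (the $\Lambda_{2r}$ being the fundamental weights that span $P^{\mathcal A}_n$), these monomials are upper-triangular with nonzero diagonal entries relative to $\{\Phi_\lambda\}$ ordered by dominance. This simultaneously shows that the monomials in the $E_r$ are linearly independent, that they span $A_{ZP}$, and hence that $A_{ZP}$ is a polynomial algebra on the $E_r$.

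Finally I would assemble the isomorphism and verify injectivity. The same leading-term computation pushed through $\pi_H$ gives $\pi_H(\Phi_\lambda)=c_\lambda\,s^{\mu}+(\text{lower terms in dominance})$ with $c_\lambda\neq 0$, so the $\pi_H(\Phi_\lambda)$ are triangular against the monomial symmetric functions $m_\mu(s)$ and thus linearly independent; hence $\pi:=\pi_H|_{A_{ZP}}$ is injective. Being an injective algebra homomorphism into the commutative ring $\mathbb{C}[s_1,\dots,s_n]^{\mathfrak S_n}$, it forces $A_{ZP}$ to be commutative and identifies it with its image, sending $E_r\mapsto e_r$. The main obstacle is the triangularity step above: one must verify that the top-weight component of $\prod_r(\xi^{1,\dots,2r}_{1,\dots,2r})^{a_r}$ genuinely survives (does not cancel after $q$-straightening) and equals $v_\lambda$, so that the $E_r$-monomials really do exhaust $P^{\mathcal A}_n$ with nonzero leading coefficients; controlling these lower-order corrections and the accompanying powers of $q$ is the delicate part, whereas the collapse of $\pi_H$ on minors and the final dimension bookkeeping are routine.
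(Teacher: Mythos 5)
Your proposal follows essentially the same route as the paper: both use the multiplicity-one decomposition to compute $\dim A_{ZP,2m}=p_n(m)$, restrict via $\pi_H$ to the diagonal torus where the quantum minors collapse so that $E_r\mapsto e_r(s)$ with $s_i=t_{2i-1}t_{2i}$, and combine linear independence of the $E_r$-monomials with the dimension count to get generation and the isomorphism onto $\mathbb{C}[s_1,\ldots,s_n]^{\mathfrak{S}_n}$. Your triangularity argument against the highest weight vectors $v_\lambda$ simply fills in the linear-independence step that the paper outsources to the analogous argument in \cite{JY}, so it is an elaboration rather than a different method.
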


\begin{proof}Because of the decomposition given in
Proposition \ref{ALandARDecomp}, the dimension of the bi-invariant
space associated with each $\lambda \in P^{\mathcal{A}}_n$ must be
exactly one.  Since the degree of the polynomial in each of these
bi-invariant spaces is $\sum \limits_{k=1}^n \lambda_k$, the
dimension of $A_{ZP,2m}$ can then be calculated as the number of
partitions in $P^{\mathcal{A}}_n$ of  $2m$. As these partitions are
in $P^{\mathcal{A}}_n$ we may also consider this as the number of
partitions of $m$ whose number of parts is less than or equal to
$n$.  Adopting the notation of Jing and Yamada \cite{JY} we denote
this by $p_{n}(m)$.

Consider the restriction of the projection map $\pi$  to $A_{ZP}$
\begin{equation}
    \pi'_H:  A_{ZP} \rightarrow A_{+}(H),
\end{equation}
where $A_{+}(H) = \mathbb{C}[t_1, \ldots , t_{n} ]$.  Then $Ker(\pi'_H) = \bigoplus \limits_{r=0}^\infty Ker(\pi'_{H,2r})$,
where
\begin{equation}
    \pi'_{H,2r}: A_{ZP,2r} \rightarrow A_{2r}(H).
\end{equation}
Similar to the proof by \cite{JY}, the monomials
$E_{r_1}E_{r_2}\ldots E_{r_k}$  $(r_1 \le r_2 \le \ldots \le r_k)$
have the degree $2(r_1 + r_2 + \ldots + r_k)$ and are linearly
independent over $\mathbb{C}$. As such the space of degree $2m$
spanned by these monomials has dimension $p_{n}(m)$.  This shows
that  the space $A_{ZP}$ is generated by $E_r$ $(1 \le r \le n)$.

Additionally, the map $\pi'_{H,2r}$ acts on the generators of $A_{ZP}$ in the following manner
\begin{align*}
    \pi'(E_r) & = \pi' \left( \sum_{I} \xi^{\phi(I)}_{\phi(I)} \right)\\
    & = \sum_{I} \left(t_{2i_1-1}t_{2i_1}\right)\left(t_{2i_2-1}t_{2i_2}\right)\cdots\left(t_{2i_r-1}t_{2i_r}\right)\ne 0
\end{align*}
where the sum runs over all subsets $I$  of $\{1, 2, \ldots, \frac{n}{2} \}$ and $\#I = r$,
thus $Ker(\pi'_{H,2r}) = (0)$.
Another way of viewing this is that each of these $E_r$ has monomials which are products of diagonal elements.
As such, $\pi(E_r) \ne 0$ for $1 \le r \le n$.  Thus we have the isomorphism
\begin{align*}
    A_{ZP} &\cong \mathbb{C}[(t_1t_2),(t_3t_4), \ldots , (t_{n-1}t_n)]^{\mathfrak{S}_{n}}\\
    &\cong \mathbb{C}[s_1,s_2, \ldots , s_{n}]^{\mathfrak{S}_{n}}
\end{align*}
where we let $s_i = t_{2i-1}t_{2i}$.
\end{proof}

\section{Spherical functions and Symmetric polynomials}

Through the isomorphism in Theorem (4.1)our $q$-zonal polynomials
are basis elements in the ring of symmetric polynomials, and they
are clearly $q$-deformation of the zonal polynomials defined on
$GL(2n,\mathbb C)/Sp(2n,\mathbb C)$.  We describe the relation with
Macdonald polynomials \cite{Mac}.

Macdonald polynomials are special orthogonal basis of the
commutative algebra \newline$\mathbb
Q(q,t)[x_1,\ldots,x_n]^{\mathfrak S_n}$, where $q$ and $t$ are two
parameters.  To describe them we consider the following shift
operator $T_{u,x_i}$ by
$$
(T_{u,x_i}f)(x_1,\ldots,x_n)=f(x_1,\ldots,ux_i,\ldots,x_n)
$$
for each $f\in\mathbb Q(q,t)[x_1,\ldots,x_n]$.  Let $X$ be another
indeterminate and define
$$\aligned D(X,q,t)&=\Delta^{-1}\sum\limits_{w\in\mathfrak
S_n}\epsilon(w)z^{w\delta}\prod\limits^n_{i=1}(X+t^{(w\delta)_i}T_{q,x_i})\\
&=\sum\limits^n_{r=0}D_rX^{n-r},
\endaligned$$
where $\delta=(n-1,n-2,\ldots,1,0)$ and
$$\Delta=\prod\limits_{1\leq i<j\leq n}(x_i-x_j)$$
is the Vandermonde determinant in $x_1,\ldots,x_n$.  It follows
immediately that $D_0=1$ and
$$D_1=\sum\limits^n_{i=1}(\prod\limits_{j\neq i}{tx_i-x_j\over
x_i-x_j})T_{q,x_i}\, .$$

Macdonald showed that for each partition
$\lambda=(\lambda_1,\ldots,\lambda_n)$ there is a unique symmetric
polynomial $P_\lambda(x;q,t)$ satisfying the two conditions (4.5 -
4.6):
\begin{equation}P_\lambda=m_\lambda+\sum\limits_{\mu<\lambda}u_{\lambda\mu}m_\mu
\end{equation}
where $u_{\lambda\mu}\in\mathbb Q(q,t)$ and $m_\mu=x_1^{\mu_1}\ldots
x_n^{\mu_n}+\ldots$ is the monomial symmetric polynomial;
\begin{equation}D_1P_\lambda=(\sum\limits^n_{i=1}q^{\lambda_i}t^{n-i})P_\lambda.
\end{equation}

Moreover Macdonald proves that $P_\lambda$ is also an eigenfunction
for all the difference operators $D_r$, and
\begin{equation} D(X;q,t)P_\lambda=\prod\limits^n_{i=1}(X+t^{n-i}q^{\lambda_i})P_\lambda.
\end{equation}

The polynomial $P_\lambda(x;q,t)$ is called the Macdonald polynomial
associated with the partition $\lambda$.  In particular,
$P_\lambda(x;q,q)$ is the famous Schur polynomial;
$\lim\limits_{t\rightarrow 1}P_\lambda(x;t^2,t)$ is the zonal
polynomial.

\begin{proposition} Under the isomorphism
$\pi:A_{zp}\longrightarrow\mathbb C[z_1,\ldots,z_{n}]^{\mathfrak
S_{n}}$, the q-zonal polynomial in $V_q(\lambda)$ is a constant
multiple of the Macdonald polynomial $P_\lambda(z;q^2,q^{-4})$.
\end{proposition}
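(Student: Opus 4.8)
The plan is to reduce the statement to Macdonald's own characterization of $P_\lambda(z;q,t)$, which has just been recalled: $P_\lambda$ is the \emph{unique} symmetric polynomial which (i) is triangular, $P_\lambda = m_\lambda + \sum_{\mu<\lambda}u_{\lambda\mu}m_\mu$, and (ii) is an eigenfunction of the Macdonald operator $D_1$ with eigenvalue $\sum_{i}q^{\lambda_i}t^{n-i}$. Since $q$ is not a root of unity, these eigenvalues are pairwise distinct as $\lambda$ ranges over partitions, so (i)--(ii) pin down $P_\lambda$ up to a scalar. Writing $\zeta_\lambda$ for the one-dimensional bi-invariant of $V_q(\lambda)$ (the $q$-zonal polynomial) and $f_\lambda := \pi(\zeta_\lambda)\in\mathbb C[z_1,\ldots,z_n]^{\mathfrak S_n}$ for its image, it therefore suffices to verify (i) and (ii) for $f_\lambda$ at the parameters $(q,t)=(q^2,q^{-4})$.

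First I would establish the triangularity (i). The isomorphism $\pi$ is induced by restriction to the diagonal torus followed by the substitution $z_i = s_i = t_{2i-1}t_{2i}$, and we computed in the proof of Theorem 4.1 that $\pi'(E_r)=\sum_{\#I=r}\prod_{i\in I}(t_{2i-1}t_{2i})=e_r(z)$, the elementary symmetric polynomial. More generally, by Proposition \ref{ExistenceAndUniqueness} the zonal $\zeta_\lambda$ lives in $V_q(\lambda)$ with dominant weight $\lambda=(\mu_1,\mu_1,\ldots,\mu_n,\mu_n)$; its highest-weight vector Eq. (\ref{highestweight}) restricts under $\pi_H$ to the monomial $t^\lambda=\prod_i(t_{2i-1}t_{2i})^{\mu_i}=z^\mu$, while every other diagonal monomial occurring in $\zeta_\lambda$ has torus-weight strictly below $\lambda$ in the dominance order. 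Hence $f_\lambda = c\,(m_\lambda + \sum_{\nu<\lambda}\ast\,m_\nu)$ for some nonzero $c$, which is (i) after normalization (here I write $\lambda$ for the half-partition $\mu$ indexing the $n$ variables $z_i$).

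The heart of the argument is the eigenvalue property (ii), and here I would use the center of $\Uq=U_q(\mathfrak{gl}(2n))$. The quantum Casimir elements, equivalently the coefficients of the quantum characteristic polynomial of the $L$-operator, are central, so they act on the irreducible comodule $V_q(\lambda)$ by the scalars of the central character $\chi_\lambda$, which are elementary-symmetric expressions in $q^{2(\lambda_j+\rho_j)}$. Because $\zeta_\lambda$ is annihilated on both sides by $\Uspq$, the right $\Uq$-action of a central element $C$ passes, after restriction $\pi_H$ to the torus, to a $q$-difference operator $\mathcal D_C$ on $\mathbb C[z]^{\mathfrak S_n}$ --- its \emph{radial part} --- so that $\mathcal D_C f_\lambda = \chi_\lambda(C)\,f_\lambda$ automatically. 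The remaining task is the explicit computation of $\mathcal D_C$ and of $\chi_\lambda$: I would show that the generating central element reproduces exactly Macdonald's operator $D(X;q^2,q^{-4})$ (whose first nontrivial coefficient is $D_1(q^2,q^{-4})$), and that the corresponding central character equals $\prod_{i=1}^n\bigl(X+q^{-4(n-i)}q^{2\lambda_i}\bigr)$, which is precisely the Macdonald eigenvalue at $(q^2,q^{-4})$. This gives (ii).

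The main obstacle is exactly this radial-part computation together with the verification that the parameters specialize to $(q^2,q^{-4})$. Two features must be tracked with care: the doubling $\lambda=(\mu_1,\mu_1,\ldots,\mu_n,\mu_n)$, which pairs the $2n$ factors $q^{2(\lambda_j+\rho_j)}$ of the $\mathfrak{gl}(2n)$ central character into $n$ factors and, through the $\rho$-shift, is responsible for the exponent $-4$; and the quadratic change of variables $z_i=t_{2i-1}t_{2i}$, responsible for turning the Macdonald base into $q^2$. The cleanest route is to adapt Noumi's radial-part formalism \cite{N} for the quantum symmetric pair $(U_q(\mathfrak{gl}(2n)),U_q(\mathfrak{sp}(2n)))$ to the present function-algebra picture, checking compatibility of the $\Uspq$-bi-invariance with the projection onto $B_-$-relative invariants. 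Once $\mathcal D_C = D(X;q^2,q^{-4})$ and the eigenvalue are matched, Macdonald's uniqueness combined with (i) forces $f_\lambda = c\,P_\lambda(z;q^2,q^{-4})$ with $c\neq 0$, completing the proof.
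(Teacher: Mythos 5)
Your proposal takes essentially the same approach as the paper: the paper also proves the result by letting the (FRT) central elements $c_k$ of the quantized enveloping algebra act by their central character on the irreducible module $V(\tilde\lambda)$, matching that eigenvalue and the leading term against Macdonald's uniqueness characterization of $P_\lambda$, and identifying the restriction of $c_k$ to $\mathbb{C}[z_1,\ldots,z_n]^{\mathfrak{S}_n}$ with the Macdonald operator $D_k$ up to a constant. The only difference is execution: the paper carries out the central-character computation explicitly (acting on a lowest weight vector and counting inversions via a Young-subgroup decomposition), a step you defer, while --- like you --- it states rather than fully derives the radial-part identification.
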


The general case of quantum spherical functions was studied by Noumi \cite{N}
using quantum groups and Letzter \cite{L} using quantum enveloping algebras.  In
the following we will outline a different approach to understand the relationship between
symmetric functions and quantum invariants. First of all let's study the $q$-difference
operators on $V_q(2\lambda)$.

Recall the center of the quantized universal enveloping algebra
$U_q({\mathfrak sl}_{n-1})$ is generated by the following $n-1$
elements \cite{FRT}.
$$
c_k=\sum\limits_{\sigma,\sigma'\in\mathfrak
S_n}(-q)^{l(\sigma)+l(\sigma')}l^{(+)}_{\sigma_1,\sigma_1'}\cdots
l^{(+)}_{\sigma_k\sigma'_k}l^{(-)}_{\sigma_{k+1}\sigma'_{k+1}}\cdots
l^{(-)}_{\sigma_n\sigma'_n},\quad k=1,\cdots,n-1$$ where
$L^{(\pm)}=(l_{ij}^{(\pm)})$ is the upper (lower) triangular
defining matrix for the quantum algebra $U_q(sl_{n-1})$ in the FRT
formulation \cite{FRT} and $l(\sigma)=\#\{i<j|\sigma_i>\sigma_j\}$.  We
only remark that the elements $l_{ij}^{(\pm)}$ are analogs of
Weyl-generators for $U_q(\mathfrak{sl}_{n-1})$.  In particular
$$l^{(\pm)}_{ii}=q^{\pm\epsilon_i},$$

The algebra $U_q({\mathfrak sl}_{n-1})$ acts on $GL_q(n,\mathbb C)$
as q-difference operators, thus the center of
$U_q(\mathfrak{sl}_{n-1})$ acts on modules $V_q(2\lambda)$ as scalar
operators. In particular our $q$-zonal polynomials are simultaneous
eigenfunctions of these $q$-difference operators.

\begin{theorem} For $1\leq k\leq n-1$, the central
element $c_k$ acts on the irreducible $U_q(sl_n)$-module
$V(\lambda)$ as a scalar multiplication by
$$q^{2|\lambda|+\binom n2+k(n-1)}[k]![n-k]!
(\sum_{1\leq i_1<\cdots<i_k\leq
n}q^{-2\lambda_{i_1}-\cdots-2\lambda_{i_k}
+2(i_1-n)+\cdots+2(i_k-n)}),
$$
where $|\lambda|=\lambda_1+\cdots+\lambda_n$.
\end{theorem}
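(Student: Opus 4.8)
The plan is to compute the eigenvalue of the central element $c_k$ directly on the highest weight vector of $V(\lambda)$, using the fact that a central element acts as a scalar on an irreducible module, so it suffices to evaluate $c_k$ on any single vector. I would first recall from the FRT presentation that the generators $l^{(\pm)}_{ij}$ satisfy $l^{(+)}_{ij}=0$ for $i>j$ and $l^{(-)}_{ij}=0$ for $i<j$, with $l^{(\pm)}_{ii}=q^{\pm\epsilon_i}$ acting on a weight vector of weight $\mu$ by $q^{\pm\langle\mu,\epsilon_i\rangle}$. The off-diagonal generators $l^{(+)}_{ij}$ ($i<j$) act as raising operators and $l^{(-)}_{ij}$ ($i>j$) as lowering operators. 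I would apply $c_k$ to the highest weight vector $v_\lambda$ and argue that many summands in the double sum over $\sigma,\sigma'\in\mathfrak S_n$ drop out.

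The key computational step is the following reduction. When acting on $v_\lambda$, any term containing an off-diagonal raising generator $l^{(+)}_{ij}$ with $i<j$ in the leftmost position sends $v_\lambda$ to zero, and more generally the only surviving contributions come from terms where the net action returns a multiple of $v_\lambda$. I would show that this forces the permutations $\sigma,\sigma'$ to be closely tied: the product $l^{(+)}_{\sigma_1\sigma_1'}\cdots l^{(+)}_{\sigma_k\sigma_k'}l^{(-)}_{\sigma_{k+1}\sigma_{k+1}'}\cdots l^{(-)}_{\sigma_n\sigma_n'}$ acting on $v_\lambda$ survives only when it reduces, after using the triangularity constraints $\sigma_a\le\sigma_a'$ for $a\le k$ and $\sigma_a\ge\sigma_a'$ for $a>k$, to a product of diagonal generators, i.e.\ when $\sigma=\sigma'$. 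On such diagonal terms the product becomes $\prod_{a\le k}q^{\langle\lambda,\epsilon_{\sigma_a}\rangle}\prod_{a>k}q^{-\langle\lambda,\epsilon_{\sigma_a}\rangle}$, and the sign $(-q)^{l(\sigma)+l(\sigma')}=(-q)^{2l(\sigma)}=q^{2l(\sigma)}$ becomes a positive power of $q$. Summing over which $k$ indices $\{i_1<\cdots<i_k\}$ land in the first block then produces exactly the symmetric sum $\sum_{i_1<\cdots<i_k}q^{-2\lambda_{i_1}-\cdots-2\lambda_{i_k}+\cdots}$ appearing in the statement, while the permutations within each block contribute the combinatorial factors $[k]!\,[n-k]!$ together with the overall $q$-power prefactor from collecting $q^{2l(\sigma)}$ and the shifts $2(i_a-n)$ coming from the weight pairings $\langle\lambda,\epsilon_{i}\rangle$ against the $\rho$-type shift built into the diagonal generators.

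The main obstacle I anticipate is the bookkeeping of the exponent: one must carefully track how the raising/lowering structure, the commutation relations among the $l^{(\pm)}_{ij}$, and the weight of $v_\lambda$ combine to give precisely the prefactor $q^{2|\lambda|+\binom n2+k(n-1)}$ and the internal shifts $2(i_a-n)$. In particular, justifying rigorously that only $\sigma=\sigma'$ survives requires a genuine argument using the triangularity and the highest weight condition, rather than a mere inspection; the commutation relations could in principle allow a lowering generator to cancel against a raising generator, so I would need to verify that the $R$-matrix relations of the FRT formalism, applied to the highest weight vector, genuinely annihilate all non-diagonal contributions. Once that vanishing is established, the remaining steps are a direct evaluation of diagonal eigenvalues and a counting of permutations fixing a given $k$-subset, which is routine.
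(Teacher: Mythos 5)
Your overall strategy (evaluate the central element on a single extreme weight vector, argue that only the diagonal terms $\sigma=\sigma'$ survive, then do the coset combinatorics of $q^{2l(\sigma)}$ over $\mathfrak S_k\times\mathfrak S_{n-k}$) is the paper's strategy, and your combinatorial second half is exactly what the paper does. But your choice of vector breaks the one step that makes everything work. In $c_k$ the factors $l^{(-)}_{\sigma_a\sigma'_a}$ stand to the \emph{right}, so they act \emph{first}. On the highest weight vector $v_\lambda$ the strictly lower triangular $l^{(-)}_{ij}$ ($i>j$) do not annihilate $v_\lambda$ -- they lower it -- and the raising factors $l^{(+)}$ are then applied not to $v_\lambda$ but to the lowered vectors. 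Concretely, for $n=2$, $k=1$, the cross term $(-q)\,l^{(+)}_{12}l^{(-)}_{21}v_\lambda$ is proportional to $ef\,v_\lambda=[\langle\lambda,\alpha_1\rangle]\,(\cdots)\,v_\lambda\neq 0$. Note also that \emph{every} summand of $c_k v_\lambda$ is a multiple of $v_\lambda$, since the total weight shift $\sum_a(\epsilon_{\sigma_a}-\epsilon_{\sigma'_a})$ vanishes; so your criterion ``only terms whose net action returns a multiple of $v_\lambda$ survive'' filters out nothing, and the off-diagonal terms genuinely contribute. This is not deferrable bookkeeping: if you keep only the diagonal terms on $v_\lambda$, the diagonal factors give $q^{+2(\lambda_{\sigma_1}+\cdots+\lambda_{\sigma_k})-|\lambda|}$, i.e.\ exponents $+2\lambda_{i_a}$ rather than the $-2\lambda_{i_a}$ of the statement -- a different (and wrong) answer, the discrepancy being exactly the cross terms you dropped.

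The fix, which is the paper's proof, is to evaluate on the \emph{lowest} weight vector $v_0$ instead. Then the rightmost factors $l^{(-)}_{\sigma_{k+1}\sigma'_{k+1}}\cdots l^{(-)}_{\sigma_n\sigma'_n}$, whose strictly lower triangular entries lie in the strict lower Borel generated by the $f_i$, kill $v_0$ unless $\sigma_a=\sigma'_a$ for all $a>k$. The upper triangularity of $L^{(+)}$ gives $\sigma_a\le\sigma'_a$ for $a\le k$, and since $\{\sigma_a\}_{a\le k}$ and $\{\sigma'_a\}_{a\le k}$ are then complements of the same set, the termwise inequalities plus equal sums force $\sigma=\sigma'$ -- a purely combinatorial step, with no operator computation and no appeal to $R$-matrix relations needed. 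After that, the diagonal factors $l^{(\pm)}_{ii}=q^{\pm\epsilon_i}$ acting on the weight $-\lambda$ produce $q^{|\lambda|-2(\lambda_{\sigma_1}+\cdots+\lambda_{\sigma_k})}$, with the minus signs coming out correctly, and the rest is the coset decomposition you sketched: $l(\tau\sigma_1\sigma_2)=l(\tau)+l(\sigma_1)+l(\sigma_2)$ with $l(\tau)=\sum_{i\le k}(\tau_i-i)$, together with $\sum_{\sigma\in\mathfrak S_m}q^{2l(\sigma)}=q^{\binom m2}[m]!$. So replace ``highest weight vector'' by ``lowest weight vector'' and your argument goes through; as written, the vanishing claim at the heart of your proposal fails.
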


\begin{proof}Pick a lowest weight vector $v_0$ in $V(\lambda)$ with
the weight
$-\lambda=-{\lambda_1}\epsilon_1-\cdots-{\lambda_n}\epsilon_n$. Note
that the generators $l^{(+)}_{ij},l_{ji}^{(-)}(i<j)$ belong to the
so-called strict upper and lower Borel subalgebra generated by $e_i$
and $f_i$ ($i=1, \cdots n-1$) respectively. The element
$l^{(-)}_{\sigma_{k+1}\sigma'_{k+1}}\cdots
l^{(-)}_{\sigma_n\sigma'_n}$ kills $v_0$ unless $\sigma_{k+1}=
\sigma'_{k+1}$, \ldots, $\sigma_{n}= \sigma'_{n}$. But
$\sigma_{1}\leq\sigma'_{1}$, \ldots, $\sigma_{k}\leq\sigma'_{k}$, so
one must have $\sigma=\sigma'$ in the action of $c_{n-k}$ on $v_0$.
We thus have
\begin{align*} c_{k}v_0&=\sum\limits_{\sigma\in\mathfrak
S_n}q^{2l(\sigma)}q^{-\lambda_{\sigma_1}-\cdots-\lambda_{\sigma_{k}}
+\lambda_{\sigma_{k+1}}+\cdots+\lambda_{\sigma_n}}v_0\\
&=q^{|{\lambda}|}\sum\limits_{\sigma\in\mathfrak
S_n}q^{2l(\sigma)-2\lambda_{\sigma_{1}}-\cdots-2\lambda_{\sigma_k}}v_0.
\end{align*}

Consider the Young subgroup $\mathfrak S_k\times \mathfrak S_{n-k}$
of $\mathfrak S_n$. We can choose its left coset representatives to
be the elements $\tau$ such that $\tau_1<\cdots<\tau_k$,
$\tau_{k+1}< \cdots < \tau_n$. Recall that an inversion of the
permutation $\tau$ is a pair $(ij)$ such that $i<j$ and
$\tau_i>\tau_j$. By construction the inversions of $\tau$ may only
take place among $(ij)$ where $i\leq k$ and $j\geq k+1$. For each $i
(\leq k)$, there are $\tau_i-1$ natural numbers less than $\tau_i$,
and $i-1$ of them already appear before $\tau_i$ in the permutation.
So there are $\tau_i-i$ inversions of $\tau$ in the form $(ij)$,
which implies that $l(\tau)=\sum_{i=1}^k (\tau_i-i)$.

Let $\tau\sigma$ be the general element in $\mathfrak S_n$ where
$\sigma=\sigma_1\sigma_2\in \mathfrak S_k\times\mathfrak S_{n-k}$.
In the sequence $(\tau\sigma(1), \ldots, \tau\sigma(k),
\tau\sigma(k+1), \ldots, \tau\sigma(n))$ we divide the inversions of
$\tau\sigma$ into three parts: the inversions among the first $k$
numbers, those among the last $n-k$ numbers, and the inversions
between the first $k$ numbers and the last $n-k$ numbers. The second
part $(\tau\sigma(k+1), \ldots, \tau\sigma(n))$
$=(\tau\sigma_2(k+1), \ldots, \tau\sigma_2(n))$ has $l(\sigma_2)$
inversions as $\tau$ preserves the order of $k+1, \ldots, n$,
similarly the first part $(\tau\sigma(1), \ldots, \tau\sigma(k))$
$=(\tau\sigma_1(1), \ldots, \tau\sigma_1(k))$ has $l(\sigma_1)$
inversions among them. Observe that we are free to switch the
numbers in each part when considering the inversions between the
first part and the second part, thus the number of inversions of
this type are exactly $l(\tau)$. Therefore we have
\begin{align*}
l(\tau\sigma_1\sigma_2)&=l(\tau)+l(\sigma_1)+l(\sigma_2)\\
&=l(\sigma_1)+l(\sigma_2)+\sum_{i=1}^k (\tau_i-i),
\end{align*}
where $\sigma_1\in \mathfrak S_k, \sigma_2\in \mathfrak S_{n-k},
\tau\in \mathfrak S_n/(\mathfrak S_k\times \mathfrak S_{n-k})$.

Now let's return back to the action $c_kv_o$. Using the
invariance of $\lambda_{\sigma(1)}+\cdots +\lambda_{\sigma(k)}$ under
$\mathfrak S_k\times\mathfrak S_{n-k}$, we have that
\begin{align*} &c_{k}v_0
=q^{2|\lambda|}\sum\limits_{\tau, \sigma_1,
\sigma_2}q^{2l(\tau\sigma_1\sigma_2)-2\lambda_{\tau\sigma_1\sigma_2
(1)}-\cdots -2\lambda_{\tau\sigma_1\sigma_2
(k)}}v_0\\
&=q^{2|\lambda|}\sum\limits_{\tau, \sigma_1,
\sigma_2}q^{2l(\tau\sigma_1\sigma_2)-2\lambda_{\tau (1)}-\cdots
-2\lambda_{\tau(k)}}v_0\\
&=q^{2|\lambda|} \sum_{\sigma_1\in\mathfrak S_k}q^{2l(\sigma_1)}
\sum_{\sigma_2\in\mathfrak S_{n-k}}q^{2l(\sigma_2)}
\sum\limits_{\tau}
q^{2l(\tau)-2\lambda_{\tau (1)}-\cdots-2\lambda_{\tau (k)}}v_0\\
&=q^{2|\lambda|+\binom k2+\binom{n-k}2}[k]![n-k]!
(\sum_{\tau(1)<\cdots<\tau(k)}q^{-2\lambda_{\tau
(1)}-\cdots-2\lambda_{\tau (k)}
+2(\tau(1)-1)+\cdots+2(\tau(k)-k)})v_0\\
&=q^{2|\lambda|+\binom n2+k(n-1)}[k]![n-k]!\\
&\hskip 1in \cdot(\sum_{1\leq\tau(1)<\cdots<\tau(k)\leq
n}q^{-2\lambda_{\tau (1)}-\cdots-2\lambda_{\tau (k)}
+2(\tau(1)-n)+\cdots+2(\tau(k)-n)})v_0
\end{align*}
where we have used the well-known identity
$\sum\limits_{\sigma\in\mathfrak S_n}q^{2l(\sigma)}=q^{\binom
n2}[n]!$ (cf. \cite{B}). \end{proof}

Now we restrict ourselves to the case of irreducible highest
$U_q(sl_{2n})$-module $V(\tilde{\lambda})$ such that
$\tilde{\lambda}=\tilde{\lambda}_1\epsilon_1+\cdots+\tilde{\lambda}_{2n}\epsilon_{2n}$
and $\lambda_{2i-1}=\lambda_{2i}=\lambda_i$ for $i=1, \ldots, n$. It is
also a lowest weight module with the lowest weight
$-\tilde{\lambda}$.

\begin{theorem}
The bi-invariant function inside $V(\tilde{\lambda})$, restricted to
the ring of symmetric functions,  is the Macdonald symmetric
function $P_{\lambda}(q^2, q^4)$.
\end{theorem}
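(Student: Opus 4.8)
```latex
The plan is to identify the bi-invariant function in $V(\tilde{\lambda})$ with a specific
$q$-zonal polynomial and then show that this polynomial satisfies the defining conditions
(4.5--4.6) of the Macdonald polynomial $P_{\lambda}(x;q^2,q^4)$. Recall from
Proposition \ref{ExistenceAndUniqueness} that the space of $q$-symplectic bi-invariants inside
$V^R_q(\tilde\lambda)$ is one-dimensional precisely when $\tilde\lambda\in P^{\mathcal{A}}_n$,
i.e. when $\tilde\lambda=(\lambda_1,\lambda_1,\ldots,\lambda_n,\lambda_n)$, so the object we
must identify is well-defined up to scalar. Under the isomorphism
$\pi:A_{ZP}\to\mathbb{C}[s_1,\ldots,s_n]^{\mathfrak S_n}$ of Theorem 4.1, the bi-invariant maps
to an honest symmetric polynomial in the variables $s_i=t_{2i-1}t_{2i}$, and the Macdonald
characterization only requires two things: the leading monomial term and the eigenvalue under
the first Macdonald operator $D_1$.

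First I would establish the eigenvalue condition. The key input is the preceding theorem
computing the action of the central elements $c_k$ of $U_q(\mathfrak{sl}_{2n})$ on
$V(\tilde\lambda)$ as explicit scalars. Since the $c_k$ act on $GL_q(2n,\mathbb{C})$ as
$q$-difference operators and the bi-invariant is a simultaneous eigenfunction, I would match
the scalar eigenvalue computed in that theorem (evaluated at the doubled weight
$\tilde\lambda_{2i-1}=\tilde\lambda_{2i}=\lambda_i$) against the Macdonald eigenvalue
$\prod_{i=1}^n(X+t^{n-i}q^{\lambda_i})$ from equation (4.7). The substitution
$\tilde\lambda_{2i-1}=\tilde\lambda_{2i}=\lambda_i$ collapses the sum
$\sum_{i_1<\cdots<i_k}q^{-2\tilde\lambda_{i_1}-\cdots}$ over $2n$ indices into the
corresponding symmetric-function expression in the $n$ doubled variables, and the parameter
correspondence $q\mapsto q^2$, $t\mapsto q^4$ should emerge from comparing the power of $q$
attached to each index. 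Concretely, the exponent $-2\tilde\lambda_{i}+2(i-n)$ in the central
scalar should, after collapsing pairs, reproduce $q^{2\lambda_i}$ times a power of $t=q^4$
tracking position, so that the restricted bi-invariant is an eigenfunction of $D_1$ (and of
all $D_r$) with exactly Macdonald's eigenvalue for $P_\lambda(x;q^2,q^4)$.

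Next I would pin down the leading term. By the explicit form of the highest weight vector
$v_{\tilde\lambda}$ in equation (\ref{highestweight}) together with the projection formula
$\pi'(E_r)=\sum_I(t_{2i_1-1}t_{2i_1})\cdots(t_{2i_r-1}t_{2i_r})$ from the proof of
Theorem 4.1, the restriction of the bi-invariant to the diagonal torus has its dominant
monomial equal to $s^{\lambda}=s_1^{\lambda_1}\cdots s_n^{\lambda_n}$, matching $m_\lambda$,
with all other monomials of strictly lower weight $\mu<\lambda$ in dominance order. Having both
the triangularity condition (4.5) and the eigenvalue condition (4.6)/(4.7) with parameters
$(q^2,q^4)$, Macdonald's uniqueness theorem forces the restricted bi-invariant to be a scalar
multiple of $P_\lambda(x;q^2,q^4)$.

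The main obstacle I expect is the bookkeeping in the parameter matching: translating the
scalar from the central-element theorem, which is stated in the $2n$-index convention with an
overall normalizing factor $q^{2|\tilde\lambda|+\binom{2n}{2}+k(2n-1)}[k]![2n-k]!$, into the
clean $n$-variable Macdonald eigenvalue while correctly extracting the $(q,t)=(q^2,q^4)$
specialization. The doubling $\tilde\lambda_{2i-1}=\tilde\lambda_{2i}=\lambda_i$ introduces a
systematic pairing of adjacent indices, and one must verify that the relative shift between the
two indices of each pair is exactly what produces the factor $t^{n-i}=q^{4(n-i)}$ rather than
some other power; this is where a careful but elementary computation, rather than any deep new
idea, is required. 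Once the eigenvalue is seen to factor as
$\prod_{i=1}^n(X+q^{4(n-i)}q^{2\lambda_i})$, the identification with $P_\lambda(x;q^2,q^4)$ is
immediate from Macdonald's uniqueness.
```
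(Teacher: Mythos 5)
Your proposal is correct and follows essentially the same route as the paper: the paper likewise specializes the central-element theorem to the doubled weight $\tilde\lambda_{2i-1}=\tilde\lambda_{2i}=\lambda_i$, observes that the resulting scalar for $c_1$ collapses to $\sum_{i}q^{-2\lambda_i+4(i-n)}$ and hence matches the Macdonald operator $D_1(q^2,q^4)$ (equivalently $D_1(q^{-2},q^{-4})$), and then combines this eigenvalue identification with the leading-term (triangularity) observation and Macdonald's uniqueness to conclude. Your additional appeals to the one-dimensionality proposition and to the projection formula $\pi'(E_r)$ just make explicit steps the paper leaves implicit.
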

\begin{proof} It follows from the theorem in the case of $U_q(sl_{2n})$-module
$V(\tilde{\lambda})$ that
\begin{align*}
c_{1}v_0&=q^{2|\tilde{\lambda}|+\binom
{2n}2+2(2n-1)}[2]![2n-2]!\sum_{1\leq i\leq 2n}q^{-2\tilde{\lambda}_i
+2(i-2n)}v_0\\
&=q^{4|\lambda|+\binom {2n}2+2(2n-1)-1}[2]^2[2n-2]!\sum_{1\leq i\leq
n}q^{-2\lambda_i +4(i-n)}v_0.
\end{align*}
In other words, the quantum Casimir operator $c_1$ agrees with
Macdonald operator $D_1(q^2, q^4)$ or $D_1(q^{-2}, q^{-4})$ on the space. We note that the leading term of the spherical functions, when restricted to
the zonal part, are exactly the leading term of the Macdonald spherical function
$P_{\lambda}(q^2, q^4)$ (which also agrees with Schur funtion $s_{\lambda}$). Hence
the eigenfunction restricted to the ring of symmetric functions are
Macdonald symmetric function $P_{\lambda}(q^2, q^4)$. Similarly the action of the
higher difference operators are given by
\begin{align*}
c_{2k}v_0&=q^{2|\tilde{\lambda}|+\binom {2n}2+2k(2n-1)}[2k]![2n-2k]!\\
&\hskip 0.5in \cdot(\sum_{1\leq\tau(1)<\cdots<\tau(2k)\leq
n}q^{-2\tilde{\lambda}_{\tau (1)}-\cdots-2\tilde{\lambda}_{\tau
(2k)}
+2(\tau(1)-2n)+\cdots+2(\tau(2k)-2n)})v_0\\
\end{align*}
\end{proof}

The last identity plus the same idea also gives that
\begin{corollary} The
restriction of $c_{k}$ to the ring $\mathbb
C[z_1,\ldots,z_n]^{\mathfrak S_n}$ is exactly the difference
operator $D_k(q^2, q^4),\;k=1,\ldots, n$ up to a constant.
\end{corollary}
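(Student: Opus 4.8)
The plan is to deduce the operator identity from an eigenvalue comparison on the basis of zonal polynomials. By the isomorphism of the previous section the bi-invariant ring $A_{ZP}$ is identified with $\mathbb{C}[z_1,\dots,z_n]^{\mathfrak{S}_n}$, and by the preceding theorem the zonal polynomial lying in $V(\tilde\lambda)$ goes over, under this identification, to a scalar multiple of the Macdonald polynomial $P_\lambda(q^2,q^4)$. Both operators in question are therefore diagonal on the same basis: $D_k(q^2,q^4)$ by Macdonald's theorem, and the restriction of $c_k$ because $c_k$ is central and so acts on the whole irreducible module $V(\tilde\lambda)$, hence on its zonal vector, by a single scalar. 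Since $\{P_\lambda(q^2,q^4)\}_\lambda$ is a $\mathbb{C}$-basis of the ring of symmetric functions, it suffices to show that for every $\lambda$ these two scalars differ by one and the same ($\lambda$-independent) constant.

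I would then write down the two eigenvalues explicitly. On the Macdonald side, the eigenvalue equation $D(X;q,t)P_\lambda=\prod_{i=1}^n(X+t^{n-i}q^{\lambda_i})P_\lambda$ shows that $D_k$, being the coefficient of $X^{n-k}$, acts on $P_\lambda$ by the elementary symmetric function $e_k(t^{n-i}q^{\lambda_i})$; under $(q,t)\mapsto(q^2,q^4)$ this becomes $e_k(q^{2\lambda_i+4(n-i)})$. On the Casimir side, the eigenvalue of $c_k$ on $V(\tilde\lambda)$ is furnished by the eigenvalue theorem proved above, specialized to the doubled weight $\tilde\lambda_{2i-1}=\tilde\lambda_{2i}=\lambda_i$ exactly as in the computation of $c_{2k}v_0$; after collapsing the sum over $\{1,\dots,2n\}$ to a sum over $\{1,\dots,n\}$ --- the step that produced the factor $q^{-1}[2]^2$ when $k=1$ --- this scalar takes the form $\kappa\,q^{2|\tilde\lambda|}\,E(\lambda)$ with $\kappa$ depending only on $n$ and $k$ and $E(\lambda)$ an elementary symmetric expression in the arguments $q^{-2\lambda_i+4(i-n)}$. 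The task is to identify $E(\lambda)$, up to a $\lambda$-independent factor, with $e_k(q^{2\lambda_i+4(n-i)})$.

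The heart of the matter, and the step I expect to be the main obstacle, is precisely this identification of the two elementary symmetric expressions. The tools available are Macdonald's symmetry $P_\lambda(q,t)=P_\lambda(q^{-1},t^{-1})$, which lets one replace $D_k(q^2,q^4)$ by $D_k(q^{-2},q^{-4})$ so that its arguments become the reciprocals $q^{-2\lambda_i+4(i-n)}$ matching those on the Casimir side, together with the reciprocal identity $e_k(a_1^{-1},\dots,a_n^{-1})=e_{n-k}(a)/e_n(a)$ and the fact that the doubling $\tilde\lambda_{2i-1}=\tilde\lambda_{2i}$ forces the $2n$ Casimir arguments into $n$ pairs differing by the fixed factor $q^{-2}$. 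One must then check that the degree-dependent powers $q^{2|\tilde\lambda|}$ coming from the quantum Casimir and $q^{2|\lambda|+4\binom{n}{2}}$ coming from $e_n$ cancel, leaving a genuinely constant proportionality factor; the bookkeeping of these $q$-powers through the collapse of the doubled sum and the $q\mapsto q^{-1}$ reindexing is the delicate part. Granting this, the two operators are diagonalized by the same basis with eigenvalues that are everywhere proportional by one constant, and two such operators on $\mathbb{C}[z_1,\dots,z_n]^{\mathfrak{S}_n}$ must coincide up to that constant, which is the assertion of the corollary. The base case $k=1$ is already the identity $c_1=D_1(q^2,q^4)$ recorded in the proof of the preceding theorem.
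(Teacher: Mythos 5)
Your strategy coincides with the paper's own (very brief) proof, which is just ``the last identity plus the same idea'': diagonalize both operators on the zonal basis, identified with $P_\lambda(q^2,q^4)$ by the preceding theorem, and compare eigenvalues coming from the Casimir eigenvalue theorem. However, the step you defer as delicate bookkeeping is not bookkeeping; it is where the whole claim lives, and as you have set it up it fails. Your assertion that the Casimir scalar collapses to $\kappa\, q^{2|\tilde\lambda|}E(\lambda)$ with $E(\lambda)$ an \emph{elementary symmetric expression in the $n$ arguments} $q^{-2\lambda_i+4(i-n)}$ is incorrect for the higher Casimirs. The collapse that produced $q^{-1}[2]^2$ works only for the linear sum in $c_1$. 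For $c_{2k}$ the scalar is (up to explicit constants) $q^{2|\tilde\lambda|}$ times $e_{2k}$ evaluated at the $2n$ values $q^{-2\tilde\lambda_j+2(j-2n)}$, which form the $n$ pairs $(a_iq^{-2},a_i)$ with $a_i=q^{-2\lambda_i+4(i-n)}$; a $2k$-element subset may contain \emph{both} members of a pair, contributing squares $a_i^2q^{-2}$. Thus $E(\lambda)=e_{2k}(a_1q^{-2},a_1,\ldots,a_nq^{-2},a_n)$, which is not $e_k$ (nor any $e_j$) of the $a_i$ alone, and no combination of the reciprocal identity $e_k(a_1^{-1},\ldots,a_n^{-1})=e_{n-k}(a)/e_n(a)$ with the symmetry $P_\lambda(q,t)=P_\lambda(q^{-1},t^{-1})$ converts it into $e_k(q^{2\lambda_i+4(n-i)})$.

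To see the failure concretely, take $2n=4$ and write $b_i=q^{2\lambda_i+4(n-i)}$, so that $D_1(q^2,q^4)$ and $D_2(q^2,q^4)$ have eigenvalues $e_1(b)$ and $e_2(b)$ on $P_\lambda(q^2,q^4)$. A direct computation from the eigenvalue theorem gives that $c_2$ acts on $V(\tilde\lambda)$ by a constant times $q^{2}e_1(b)^2+(1+q^{4})e_2(b)$: a genuine polynomial combination, proportional to neither $e_1(b)$ nor $e_2(b)$ by any $\lambda$-independent constant. Even your ``base case'' is not clean: for $n=2$ the $c_1$-eigenvalue is proportional to $q^{2\lambda_1+4\lambda_2-4}+q^{4\lambda_1+2\lambda_2}=q^{2|\lambda|-4}\left(q^{2\lambda_1+4}+q^{2\lambda_2}\right)$, i.e.\ $q^{2|\lambda|}$ times the $D_1(q^2,q^4)$-eigenvalue, so the grading factor you hope will cancel survives and is constant only on each homogeneous component. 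What your computation (and the paper's sketch) actually yields is the weaker, generating-function statement: the restricted $c_m$ have eigenvalues proportional to $e_{2n-m}(b_1q^2,b_1,\ldots,b_nq^2,b_n)$, hence are polynomials in $D_1(q^2,q^4),\ldots,D_n(q^2,q^4)$, with a literal proportionality $c_m\propto D_k$ only in extreme cases such as $c_{2n-1}\propto D_1$. As written, your plan cannot close this gap; one must either reformulate the corollary at the level of the commutative algebra generated by the $D_j$, or single out the particular Casimirs (and degree-dependent constants) for which the identification is exact.
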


\centerline{\bf Acknowledgments}
NJ gratefully acknowledges the partial support of Max-Planck Institut f\"ur Mathematik in Bonn, Simons Foundation grant 198129, and NSFC grant 10728102 during this work.

\begingroup
\baselineskip12pt
\parskip12pt

%\bibliographystyle{alpha}
%\bibliography{BibDatabase}

%\bibliographystyle{amsalpha}

N.J.: Department of Mathematics, North Carolina State University,
Raleigh, NC 27695, USA and School of Sciences, South China University of
Technology, Guangzhou, Guangdong 510640, China

jing@math.ncsu.edu

R.R.: Department of Mathematics, Gonzaga University, Spokane, WA
99258, USA

rayr@gonzaga.edu

\end{document}